\newtheorem{thm}{Theorem}
\newtheorem{lem}{Lemma}
\newtheorem{cor}{Corollary}
\newcommand{\F}{\mathbb{F}}
\newcommand{\Fq}{\F_q}
\newcommand{\Fqs}{\F_q^*}
\newcommand{\Fqn}{\F_{q^n}}
\newcommand{\Fqt}{\F_{q^3}}
\newcommand{\Fqts}{\F_{q^3}^*}
\newcommand{\Fqf}{\F_{q^4}}
\newcommand{\Fqfs}{\F_{q^4}^*}
\newcommand{\Ln}{\mathcal{L}_{n}}
\newcommand{\Tn}{\mathcal{T}_{n}}
\newcommand{\Lt}{\mathcal{L}_{3}}
\newcommand{\Lf}{\mathcal{L}_{4}}
\newcommand{\Tf}{\mathcal{T}_{4}}
\newcommand{\Magma}{\textsc{Magma}}
\newcommand{\MagmaV}{\Magma~V2.23}
\newcommand{\regsym}{\textsuperscript{\tiny\textregistered}}
\begin{document}

\title[Existence results for primitive elements]{Existence results for primitive elements in  cubic and quartic extensions of a finite field}

\author{Geoff Bailey}
\address{Computational Algebra Group, School of Mathematics and Statistics, University of Sydney, Australia}
\email{geoff.bailey@sydney.edu.au}
\curraddr{}
\thanks{}

\author{Stephen D. Cohen}
\address{School of Mathematics and Statistics, University of Glasgow, Scotland}
\email{stephen.cohen@glasgow.ac.uk}
\curraddr{}
\thanks{}

\author{Nicole Sutherland}
\address{Computational Algebra Group, School of Mathematics and Statistics, University of Sydney, Australia}
\email{nicole.sutherland@sydney.edu.au}
\curraddr{}
\thanks{}

\author{Tim Trudgian}
\address{School of Physical, Environmental and Mathematical Sciences, UNSW Canberra
at the Australian Defence Force Academy, Campbell, ACT 2610, Australia}
\email{t.trudgian@adfa.edu.au}
\curraddr{}
\thanks{Supported by Australian Research Council Future Fellowship FT160100094.}

\subjclass[2010]{Primary 11T30, 11T06}

\keywords{Primitive elements, finite fields, cubic generators}

\dedicatory{}

\begin{abstract}
  \noindent With $\Fq$ the finite field of $q$ elements, we investigate the following question.
If $\gamma$ generates $\Fqn$ over $\Fq$ and $\beta$ is a non-zero element of $\Fqn$, is there always
an $a \in \Fq$ such that $\beta(\gamma + a)$ is a primitive element? We resolve this case when $n=3$, thereby proving a conjecture by Cohen. We also improve substantially on what is known when $n=4$.
\end{abstract}

\maketitle

\else
\documentclass[11pt]{article}
\usepackage{a4wide}

\usepackage{booktabs}
\usepackage[ruled,vlined]{algorithm2e}

\usepackage{amsthm}
\usepackage{amsmath}
\usepackage{amssymb}
\usepackage{comment}
\newtheorem{thm}{Theorem}
\newtheorem{lem}{Lemma}
\newtheorem{cor}{Corollary}
\newtheorem{conjecture}{Conjecture}
\newcommand{\F}{\mathbb{F}}
\newcommand{\Fq}{\F_q}
\newcommand{\Fqn}{\F_{q^n}}
\newcommand{\Fqs}{\F_q^*}
\newcommand{\Fqt}{\F_{q^3}}
\newcommand{\Fqts}{\F_{q^3}^*}
\newcommand{\Fqf}{\F_{q^4}}
\newcommand{\Fqfs}{\F_{q^4}^*}
\newcommand{\Ln}{\mathcal{L}_{n}}
\newcommand{\Lt}{\mathcal{L}_{3}}
\newcommand{\Tn}{\mathcal{T}_{n}}
\newcommand{\Lf}{\mathcal{L}_{4}}
\newcommand{\Tf}{\mathcal{T}_{4}}

\newcommand{\Magma}{\textsc{Magma}}
\newcommand{\MagmaV}{\Magma~V2.23}
\newcommand{\regsym}{\textsuperscript{\tiny\textregistered}}

\title{Existence results for primitive elements in  cubic and quartic extensions of a finite field}

\author{
  Geoff Bailey\\
  Computational Algebra Group, \\
  School of Mathematics and Statistics, \\
  University of Sydney, Australia \\
  geoff.bailey@sydney.edu.au
 \and
  Stephen D. Cohen \\
  School of Mathematics and Statistics, \\
  University of Glasgow, Scotland \\
  stephen.cohen@glasgow.ac.uk
  \and
  Nicole Sutherland \\
  Computational Algebra Group, \\
  School of Mathematics and Statistics, \\
  University of Sydney, Australia \\
  nicole.sutherland@sydney.edu.au
\and
  Tim Trudgian\footnote{Supported by Australian Research Council Future Fellowship FT160100094.} \\
School of Physical, Environmental and Mathematical Sciences\\ The University of New South Wales Canberra, Australia \\
  t.trudgian@adfa.edu.au
}
\date{}

\begin{document}

\maketitle

\begin{abstract}
  \noindent With $\Fq$ the finite field of $q$ elements, we investigate the following question.
If $\gamma$ generates $\Fqn$ over $\Fq$ and $\beta$ is a non-zero element of $\Fqn$, is there always
an $a \in \Fq$ such that $\beta(\gamma + a)$ is a primitive element? We resolve this case when $n=3$, thereby proving a conjecture by Cohen. We also improve substantially on what is known when $n=4$.

\end{abstract}

\textit{AMS Codes: 11T30, 11T06}

\fi

\section{Introduction}
Let $q$ be a prime power and let $\Fq$ be the finite field of order $q$.
Suppose that $\gamma$ generates $\Fqn$ (over $\Fq$, as throughout);
thus $\Fqn = \Fq(\gamma)$.
Davenport \cite{Davenport1962} showed that whenever $q$ is a sufficiently
large prime there exists an $a\in \Fq$ such that $\gamma + a$ is a
primitive element of $\Fqn$.
This result was generalised for $q$ a prime power by Carlitz \cite{Carlitz}.

Consider the following problem: If $\gamma_1$ and $\gamma_2$ are non-zero
members of $\Fqn$ such that $\gamma_2/\gamma_1$ generates $\Fqn$,
is there always an $a \in \Fq$ such that $a\gamma_1 + \gamma_2$ is primitive?
Equivalently,
if $\gamma$ generates $\Fqn$ and $\beta \in \Fqn^*$, is there always
an $a \in \Fq$ such that $\beta(\gamma + a)$ is primitive?

Define $\Ln$ to be the set of all $q$ for which such an $a$ always exists
for any $\gamma_1$ and $\gamma_2$
(or $\beta$ and $\gamma$ in the alternative formulation)
satisfying the conditions.
The \textit{line problem} for degree $n$ extensions is to determine
which prime powers $q$ are in $\Ln$.

For quadratic extensions $\F_{q^2}$ Cohen \cite{Cohen87} proved that
there is always such a representation (i.e., that all prime powers $q$
are in $\mathcal{L}_{2}$).
In \cite[Thm 5.1]{CohenCubic1} he considered cubic fields and proved
\begin{thm}[Cohen]\label{tim:co}
Let $q\notin \{3,4,5,7,9,11,13,31,37\}$ be a prime power.
Unless $q$ is one of an explicit set of $149$ possible exceptions
(the largest of which is $q=9811$)
then $q\in\Lt$.
\end{thm}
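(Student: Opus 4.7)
The plan is to follow the standard character-sum-plus-sieve approach that has become routine for primitive element existence problems. For fixed $\gamma$ generating $\Fqt$ and fixed $\beta \in \Fqts$, I would express the number $N(\beta,\gamma)$ of $a \in \Fq$ with $\beta(\gamma+a)$ primitive as
\[
N(\beta,\gamma) = \frac{\phi(q^3-1)}{q^3-1} \sum_{d \mid q^3-1} \frac{\mu(d)}{\phi(d)} \sum_{\chi \text{ of order } d} \chi(\beta) \sum_{a \in \Fq} \chi(\gamma+a).
\]
Isolating the $d=1$ term gives a main term of size $\phi(q^3-1) q/(q^3-1)$, and for each nontrivial character $\chi$ of order dividing $q^3-1$, Weil's bound on $\sum_a \chi(\gamma+a)$ yields $|\sum_a \chi(\gamma+a)| \le 2\sqrt{q}$, since $\gamma$ has minimal polynomial of degree $3$. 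Collecting these bounds produces the clean sufficient condition
\[
q^{1/2} > 2\,W(q^3-1),
\]
where $W(m)$ denotes the number of distinct prime divisors of $m$.

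The first obstacle is that this crude condition still fails for moderately large $q$ because $W(q^3-1)$ can be as large as roughly $\log q / \log\log q$. To push it further I would invoke the standard sieve inequality (Cohen's version): choose a divisor $\ell$ of $q^3-1$ with complementary primes $p_1,\ldots,p_s$, set $\delta = 1 - \sum_{i} 1/p_i$, and, assuming $\delta > 0$, obtain the stronger sufficient condition
\[
q^{1/2} > 2\bigl(W(\ell) + s/\delta\bigr).
\]
A careful choice of $\ell$ (typically taking $\ell$ to be the product of the small prime factors of $q^3-1$ so that the remaining $p_i$ are large and $\delta$ is close to $1$) then reduces the list of $q$ for which the inequality fails to a finite, computable set. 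This sieve bookkeeping, together with elementary estimates for $W$, should confirm the theorem for all $q$ above a moderate cutoff.

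For the remaining $q$ below the cutoff I would pass to direct computation, which is where the work of the paper presumably lies. For each surviving $q$ one fixes a basis of $\Fqt / \Fq$ and loops over representatives of $(\beta,\gamma)$ modulo the obvious $\Fqs$-scaling on $\beta$ and affine action $\gamma \mapsto c\gamma+d$ (which preserves the property being tested, up to absorbing scalars into $\beta$), checking whether some $a \in \Fq$ makes $\beta(\gamma+a)$ primitive. The quotienting by these group actions is essential to keep the search tractable. The small exceptional set $\{3,4,5,7,9,11,13,31,37\}$ is where the genuine pattern fails, and the $149$ further exceptions are the ones where the sieve inequality is inconclusive but exhaustive search happens to succeed.

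The main obstacle is really twofold: first, optimising the sieve choice of $\ell$ uniformly in $q$ so that the number of residual $q$ is small enough to handle computationally; and second, organising the computational verification so that it runs in acceptable time, since the naive enumeration of pairs $(\beta,\gamma)$ is of size roughly $q^6$ per field and must be cut down by exploiting the group actions above and by clever primitivity testing (e.g.\ testing $x^{(q^3-1)/p} \ne 1$ for each prime $p \mid q^3-1$). The analytic inequality is standard; it is the sieve optimisation and the computation that carry the burden of the proof.
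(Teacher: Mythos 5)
Your overall architecture --- the character-sum expression for the number of suitable $a$, a main term plus error terms, a sieve to handle the many prime divisors of $q^3-1$, and direct computation for the survivors --- is exactly the route taken in the source of this theorem (Cohen's papers, which the present paper quotes rather than reproves). However, there are two genuine gaps. First, the estimate $\bigl|\sum_{a\in\Fq}\chi(\gamma+a)\bigr|\le (n-1)\sqrt q$ for a nontrivial multiplicative character $\chi$ of $\Fqn$ is \emph{not} an instance of Weil's bound. Weil's theorem bounds $\sum_{x\in\Fq}\chi'(f(x))$ for a character $\chi'$ \emph{of $\Fq$} and a polynomial $f$ over $\Fq$; here $\chi$ is a character of the extension field evaluated along the $\Fq$-line $\gamma+\Fq$, and the degree-$3$ minimal polynomial of $\gamma$ does not convert one into the other (that only works when $\chi$ factors through the norm map). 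The required inequality is Katz's theorem (Lemma~\ref{katz} in the paper), a deep result proved by $\ell$-adic methods; the paper even devotes Section~\ref{sec3} to giving an elementary proof of a sharper bound in the special case where the order of $\chi$ divides $q^2+q+1$. Citing Weil here leaves the central analytic input unjustified.

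Second, your quantitative bookkeeping is off in a way that matters. Summing the error terms over all nontrivial squarefree divisors $d$ of $q^3-1$ produces a factor equal to the number of squarefree divisors, i.e.\ $2^{W(q^3-1)}$ in your notation ($W=$ number of distinct prime divisors), not $W(q^3-1)$ itself. The correct crude condition is of the shape $q^{1/2}>2\bigl(2^{W(q^3-1)}-1\bigr)$; with your stated condition $q^{1/2}>2\,W(q^3-1)$ the theorem would be almost vacuous, since $W(q^3-1)=O(\log q/\log\log q)$, whereas $2^{W(q^3-1)}$ can exceed $q^{1/2}$ for $q$ well beyond $10^4$ --- this is precisely why the sieve and the list of $149$ survivors are needed at all. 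The same $2^{t}$ factor (for $t$ the number of primes retained in the sieve core) must appear in the sieved criterion, as in the paper's Lemma~\ref{improvedsieve}; your version $q^{1/2}>2(W(\ell)+s/\delta)$ omits it and would yield a drastically different (and incorrect) exceptional set. A minor further point: in this theorem the $149$ values are merely \emph{possible} exceptions left unresolved by the sieve; their elimination by computation is the content of the present paper's Theorem~\ref{L3complete}, not part of Cohen's result.
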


Theorem 1.3 in \cite{CohenCubic2} establishes that there are at most 149
exceptional values of $q$, and these are listed in
\cite[Thm 6.4]{CohenCubic2}.%
\footnote{%
We remark that checking the derivation of \cite[Thm 6.4]{CohenCubic2}
confirms that there are indeed 149 exceptions;
however, $q = 2221$ is an exception not listed there, while $q=4096$, which
is listed, can be removed by taking $t=1$ in \cite[Prop 6.1]{CohenCubic2}.}
For completeness, a full list of the possible exceptions (modified as explained in Section~\ref{West}) is given in Corollary \ref{co} below.

The principal goal of this paper is to resolve the line problem for
cubic extensions completely by proving

\begin{thm}\label{L3complete}
Let $q$ be a prime power.
Then $q\in\Lt$ iff $q\notin\{3,4,5,7,9,11,13,31,37\}$.
\end{thm}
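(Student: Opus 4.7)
The strategy is to upgrade Theorem \ref{tim:co} to the sharp characterisation of $\Lt$ by a finite, computer-assisted verification. By that theorem, together with the corrected list of possible exceptions in Corollary \ref{co} below, every prime power $q\notin\{3,4,5,7,9,11,13,31,37\}$ already lies in $\Lt$ apart from at most $149$ explicit candidates, the largest of which is $q=9811$. Proving Theorem \ref{L3complete} therefore splits into two disjoint tasks: (i) for each of the nine small excluded values, exhibit an admissible pair $(\gamma,\beta)$ for which no $a\in\Fq$ yields a primitive $\beta(\gamma+a)$, confirming that these really fail; and (ii) for each of the $149$ candidates, show that every admissible pair admits such an $a$, so that the candidate in fact lies in $\Lt$.

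Task (i) is mechanical. Each of the nine fields $\Fqt$ has at most $37^3<5\times 10^4$ elements, so a short \Magma~enumeration over all pairs $(\gamma,\beta)$ with $\gamma\in\Fqt\setminus\Fq$ and $\beta\in\Fqts$ produces a witnessing non-example for each $q$ within seconds, and the resulting nine obstructions can be recorded and independently rechecked.

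Task (ii) is the substantive computational problem. Proceeding candidate by candidate, one builds an explicit model of $\Fqt$ over $\Fq$ and pre-factors $q^3-1$ once, so that every subsequent primitivity test reduces to at most $\omega(q^3-1)$ modular exponentiations. Two symmetries cut the search space. The translation $\gamma\mapsto\gamma+c$ for $c\in\Fq$ simply reindexes the line $\{\gamma+a:a\in\Fq\}$, so $\gamma$ may be chosen from a transversal of the $\Fq$-translation action on $\Fqt\setminus\Fq$. The Frobenius $\sigma(x)=x^q$ sends an admissible pair $(\gamma,\beta)$ to the admissible pair $(\gamma^q,\beta^q)$ with the same answer, giving a further factor-$3$ saving. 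For each reduced pair $(\gamma,\beta)$ one walks $a\in\Fq$ in the inner loop and halts at the first primitive $\beta(\gamma+a)$.

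The principal obstacle is feasibility of the $q=9811$ case, where $|\Fqt|\approx 9\times 10^{11}$ and even the reduced outer loop is large. The computation remains tractable because the density $\varphi(q^3-1)/(q^3-1)$ of primitive elements stays non-negligible on the candidate list, so the inner $a$-loop almost always terminates within the first few trials; and because a sieve that first rules out $\beta(\gamma+a)$ lying in each proper index-$r$ subgroup for small primes $r\mid q^3-1$ avoids the full cost of primitivity testing for most candidates $a$. A careful \Magma~implementation along these lines clears the largest candidate in a manageable amount of wall-clock time and the smaller candidates correspondingly faster; combined with task (i), this establishes Theorem~\ref{L3complete}.
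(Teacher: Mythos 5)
Your overall strategy --- reduce to the finite candidate list from Theorem \ref{tim:co} and clear it by computer --- is the right skeleton, but as written the plan has a genuine feasibility gap at its core, and it is precisely this gap that the paper's two main new ingredients exist to close. With only the translation symmetry (factor $q$) and the Frobenius symmetry (factor $3$), the number of reduced pairs $(\beta,\gamma)$ is of order $q^5/3$; for $q=9811$ this is roughly $3\times 10^{19}$, and since the inner loop over $a$ must be entered at least once per pair, no amount of early termination or subgroup sieving rescues this --- it is centuries of computation, not ``a manageable amount of wall-clock time.'' The symmetry you are missing is the multiplicative one: $(\beta,\lambda\gamma)$ is good iff $(\lambda\beta,\gamma)$ is, for $\lambda\in\Fqs$, which saves a further factor of $q-1$ and brings the count down to $(q^3-1)(q+1)$; on top of that the paper restricts $\beta$ to non-primitive powers $\omega^k$ with $k$ less than the radical of $q^3-1$ (Algorithm \ref{alg2}). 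Your Frobenius factor of $3$ is correct but negligible by comparison.

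Even with those reductions, the paper found direct computation impractical beyond roughly $q=2000$ in \Magma{} and had to avoid the largest candidates altogether by a theoretical argument you do not have: Lemma \ref{better}, an elementary improvement of Katz's bound from $2\sqrt{q}$ to $\sqrt{q}+1$ for non-principal characters of order dividing $q^2+q+1$, fed into the refined sieve of Lemma \ref{LemC}. This cuts the candidate list from $146$ (not $149$; see Corollary \ref{co}) to the $82$ values of Corollary \ref{cor82}, and crucially lowers the largest candidate from $9811$ to $4951$, after which a specialised logarithm-table program finishes each remaining $q$ in hours. Your task (i) is fine but redundant --- the nine genuine exceptions are already established in Cohen's earlier work, which is why they are excluded from the hypothesis of Theorem \ref{tim:co}. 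To repair your proposal you would need to add both the multiplicative coset reduction and either the improved character-sum bound or some comparably strong device for eliminating the candidates above a few thousand.
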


The outline of this paper is as follows. In Section \ref{West} we outline an improvement of the modified prime sieve, as used by Cohen \cite{CohenCubic2}. This, and the results in Section \ref{sec3}, allow us to reduce the list of possible exceptions of Theorem \ref{tim:co}. In Section \ref{Comp} we outline the computational complexity in verifying that an element satisfies Theorem \ref{tim:co}, and in Section \ref{Nike} we present the results of our computations. These allow us to prove Theorem \ref{L3complete}. Finally, in Section \ref{square} we give an improvement on what is known for quartic extensions.

\section{A refinement of the modified prime sieve}\label{West}


Consider \cite[Prop 4.1]{CohenCubic1} and its generalisation to extensions
of degree $n$ in \cite[Prop 6.3]{CohenCubic2}.
This modifies the sieving argument given in \cite[Prop 3.3]{CohenCubic1} by
treating specially one of the sieving primes $l$
(in practice the largest prime divisor).
We can extend this by treating specially $r$ primes $l_{1}, \ldots, l_{r}$
(in practice the largest $r$ prime divisors).

Throughout, for any positive integer $m$, define $\theta(m)= \phi(m)/m$, where $\phi$ denotes Euler's function. Also, by the {\em radical}
of  $m$ we shall mean the product of the {\em distinct} primes of $m$. 

The function $N$ is as defined in \cite{CohenCubic1} and \cite{CohenCubic2}.  Briefly, assume $\alpha, \beta \in \mathbb{F}_{q^n}^*$
are given (with $\beta$ being a generator),  as described in the definition of ${\mathcal{L}}_n$.  Then, for any divisor $e$ of $q^n-1$, $N(e)$ is 
the number of $a \in \mathbb{F}_q$ such that  $\beta(\alpha +a)$ is $e$-free, where an $e$-free element of $\mathbb{F}_{q^n}^*$ is one that can only be written
as $\gamma^d, \gamma \in \mathbb{F}_{q^n}$,  for a divisor $d$ of $e$ if $d=1$. 
 
 Our first lemma extends \cite[Prop 6.3]{CohenCubic2} and its proof follows a similar pattern.
\begin{lem}\label{improvedsieve}
Let $q$ be a prime power:  write the radical of $q^{n} -1$ as
$kp_{1}\cdots p_{s} l_{1}\cdots l_{r}$, where $k$ has $t$ distinct
prime divisors and $p_{1}, \ldots, p_{s}, l_{1}, \ldots, l_{r}$
are distinct prime numbers.
Set
$m= \theta(k)$,
$\delta = 1 - \sum_{i=1}^{s} \frac{1}{p_{i}}$ and
$\varepsilon = \sum_{j=1}^r \frac{1}{l_j}$.
If $\delta m > \varepsilon$ and if
\begin{equation*}\label{eq:cry}
q> (n-1)^{2} \left\{ \frac{2^{t}m(s-1+ 2\delta) -m\delta + r - \varepsilon}{m \delta - \varepsilon}\right\}^{2},
\end{equation*}
then $q\in \Ln$.
(Here, by convention, if $s=0$ then $\delta=1$,
and if $r=0$ then $\varepsilon=0$.)
\end{lem}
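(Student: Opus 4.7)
The plan is to adapt the strategy of \cite[Prop 6.3]{CohenCubic2}, which handled the special case $r=1$, to $r \geq 1$ distinguished primes. Write $e$ for the radical of $q^n - 1$ and set $e_0 = kp_1\cdots p_s$. To conclude $q \in \Ln$ it suffices to show $N(e) > 0$.

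First, I would establish an iterated sieve inequality. Applying the standard union bound over the distinguished primes yields
\begin{equation*}
N(e_0 l_1 \cdots l_r) \;\geq\; N(e_0) \;-\; \sum_{j=1}^{r} \bigl(N(e_0) - N(e_0 l_j)\bigr),
\end{equation*}
which is valid because the $l_j$-freeness restriction removes at most $N(e_0) - N(e_0 l_j)$ elements from the $e_0$-free set. I would then apply the usual $p$-sieve (as in \cite[Prop 3.3]{CohenCubic1}) separately to $N(e_0)$ with base $k$, and to each $N(e_0 l_j)$ with base $k l_j$:
\begin{align*}
N(e_0) &\;\geq\; \sum_{i=1}^{s} N(k p_i) \;-\; (s-1) N(k), \\
N(e_0 l_j) &\;\geq\; \sum_{i=1}^{s} N(k p_i l_j) \;-\; (s-1) N(k l_j).
\end{align*}
The combination reduces the problem to bounds on $N(k)$, $N(kp_i)$, $N(kl_j)$, and $N(kp_i l_j)$.

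Next, I would insert the Weil-type character-sum estimate
\begin{equation*}
\bigl| N(d) - \theta(d)\,q \bigr| \;\leq\; \theta(d)\,(W(d) - 1)\,(n - 1)\sqrt{q}, \qquad W(d) = 2^{\omega(d)},
\end{equation*}
valid for each divisor $d$ of $q^n - 1$, using the lower bound on terms entering with a positive coefficient and the upper bound on those entering with a negative coefficient. Multiplicativity of $\theta$ gives $\theta(kp_i l_j) = m(1 - 1/p_i)(1 - 1/l_j)$ and analogous identities, while $W(k)=2^t$, $W(kp_i)=W(kl_j)=2^{t+1}$, and $W(kp_i l_j)=2^{t+2}$. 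Collecting terms, the main contributions regroup as $q(m\delta - \varepsilon)$ and the error contributions as $(n-1)\sqrt{q}$ times an explicit combinatorial factor in $2^t m$, $s$, $r$, $\delta$, $\varepsilon$.

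Assembling everything produces a lower bound of the form
\begin{equation*}
N(e) \;\geq\; q(m\delta - \varepsilon) \;-\; (n-1)\sqrt{q}\,\bigl[\, 2^{t} m(s-1 + 2\delta) \;-\; m\delta \;+\; r \;-\; \varepsilon \,\bigr].
\end{equation*}
Under the hypothesis $m\delta > \varepsilon$ the leading coefficient is positive, so demanding $N(e) > 0$ is equivalent to the displayed lower bound on $q$. The main obstacle is the bookkeeping in the last step: one must track how the $(W(d) - 1)$ and $\theta(d)$ weights interact through both the $l_j$-sieve and the inner $p_i$-sieve so that the error factor collapses to exactly $2^{t}m(s - 1 + 2\delta) - m\delta + r - \varepsilon$. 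A sanity check is that specialising to $r = 1$ must reproduce the expression of \cite[Prop 6.3]{CohenCubic2}, and $r = 0$ must reduce to an unmodified sieve bound.
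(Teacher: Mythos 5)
There is a genuine gap, and it sits exactly where you flag ``the main obstacle'': the bookkeeping does not collapse to the stated error factor, because your decomposition is not the one that produces it. The paper's modified sieve handles the distinguished primes by comparing against the \emph{trivial} base: each loss $N(e_0)-N(e_0l_j)$ is bounded by $N(1)-N(l_j)$ (dropping the $e_0$-freeness condition can only increase the count of non-$l_j$-free elements), which gives $N(q^n-1)\ge N(e_0)+\sum_{j}N(l_j)-rN(1)$ as in (\ref{eq:1}) and hence, via $|N(l_j)-(1-\frac{1}{l_j})N(1)|\le(1-\frac{1}{l_j})(n-1)\sqrt q$, an $l_j$-contribution of only $-\varepsilon q-(r-\varepsilon)(n-1)\sqrt q$. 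You instead expand each $N(e_0l_j)$ by re-running the inner $p$-sieve with base $kl_j$, so your error term picks up $W(kp_il_j)=2^{t+2}$ summed over all $rs$ pairs $(i,j)$; this yields an error coefficient of order $rs\,2^{t+2}m$ rather than $2^{t}m(s-1+2\delta)+r-\varepsilon$, and the main term comes out as $m\delta(1-\varepsilon)q$ (or worse), not $(m\delta-\varepsilon)q$. The final inequality you display therefore does not follow from your decomposition.

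There is also a sign problem. After expanding your union bound, $N(e_0)$ carries the net coefficient $1-r$, which is negative for $r\ge2$, so you need an \emph{upper} bound on $N(e_0)$; the $p$-sieve you invoke supplies only a lower bound, and the direct Weil-type estimate for $N(e_0)$ has weight $W(e_0)=2^{t+s}$, which reintroduces exactly the exponential dependence on $s$ that the sieve is designed to avoid. The repair is to replace your treatment of the $l_j$ by the inequality $N(e_0)-N(e_0l_j)\le N(1)-N(l_j)$ (this is \cite[Lemma 4.1]{CohenCubic2}, applied $r$ times), keep the inner sieve with base $k$ only as in (\ref{eq:2}), and then insert the three estimates (\ref{eq:Nk})--(\ref{eq:Nlj}); with that change the computation does collapse to the stated bound.
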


\begin{proof}
Apply \cite[Lemma 4.1]{CohenCubic2} $r$ times, showing that
\begin{equation}\label{eq:1}
\begin{split}
N(q^{n} - 1)
&
\geq N(k p_{1} \cdots p_{s}) + \sum_{j=1}^rN(l_{j})\  - rN(1)\\
&
= N(k p_{1} \cdots p_{s}) + \sum_{j=1}^r\left(N(l_{j})- \left(1-\frac{1}{l_j}\right)N(1)\right)-\varepsilon N(1).
\end{split}
\end{equation}
From \cite[Lemma 4.2]{CohenCubic2} (just a further $s-1$ applications
of \cite[Lemma 4.1]{CohenCubic2})
\begin{equation}\label{eq:2}
N(k p_{1} \cdots p_{s})  \geq \delta N(k)+ \sum_{i=1}^s\left(N(kp_i)-\left(1-\frac{1}{p_i}\right)N(k)\right).
\end{equation}

Of course, $N(1)=q$.  Also, from \cite[Cor 2.3]{CohenCubic2},
\begin{equation}\label{eq:Nk}
N(k) \geq \theta(k)(q -(n-1)(2^t-1) \sqrt{q});
\end{equation}
for $i=1, \ldots, s$,
\begin{equation} \label{eq:Nkpi}
\left|N(kp_i)-\left(1-\frac{1}{p_i}\right)N(k)\right|\leq \left(1-\frac{1}{p_i}\right)\theta(k)(n-1)2^t \sqrt{q};
\end{equation}
and, for $j =1, \ldots,r$,
\begin{equation}\label{eq:Nlj}
\left|N(l_j)-\left(1-\frac{1}{l_j}\right)N(1)\right|\leq \left(1- \frac{1}{l_j} \right)(n-1)\sqrt{q}.
\end{equation}

Applying  (\ref{eq:Nk}), (\ref{eq:Nkpi}) and (\ref{eq:Nlj}) to  (\ref{eq:1}) and  (\ref{eq:2}) and using the definitions of $m$, $\delta$and $\varepsilon$, we obtain
$$N(q^n-1)\geq (m\delta -\varepsilon)q -(n-1)\{2^t(s-1+2\delta)m-m\delta +(r-\varepsilon)\}\sqrt{q}.   $$

The criterion of the lemma follows.
\end{proof}

The possible gain in using Lemma \ref{improvedsieve} in lieu of
\cite[Prop 6.3]{CohenCubic2} stems from the reduction in~$s$.
Provided that the primes $l_{1}, \ldots,  l_{r}$ are sufficiently large,
the reduction in $s$ may offset the loss of a slightly smaller value of
$\delta$.

Restricting to $n=3$, we can use Lemma \ref{improvedsieve} to eliminate
three values of $q$ from Cohen's list $S$ in
\cite[Thm 4.2]{CohenCubic1}.
Setting $r = 0$ and $t = 1$ suffices\footnote{When $r = 0$, Lemma \ref{improvedsieve} is slightly better \cite[Prop 3.3]{CohenCubic1} --- just better to rule
out this one case.} to eliminate $q = 809$,
while choosing $r = 2$ and $t = 2$ allows us to rule out
$q = 1951$ and $q = 5791$.
This proves
\begin{cor}\label{co}
Let $q\notin \{3,4,5,7,9,11,13,31,37\}$.
Then $q\in \Lt$ except possibly for $146$ values of $q$.
These are
\begin{equation}\label{scone}
\begin{split}
\{&101, 103, 107, 109, 113, 121, 125, 127, 131, 137, 139, 149, 151, 157,
163, 169, 179, 181,\\ &191,
193, 197, 199, 211, 223, 229, 233, 239, 241,
243, 251, 256, 263, 269, 271, 277, 281,\\ &283, 289,
 307, 311, 313, 331,
337, 343, 347, 349, 359, 361, 367, 373, 379, 397, 419, 421, \\ &431, 439,
443, 457, 461, 463, 491, 499, 521, 523, 529, 541, 547, 571, 601, 607,
613, 619,\\ & 625, 631, 661, 691,
 709, 729, 739, 751, 757, 811, 821,
823, 841, 859, 877, 919, 961, 967,\\ &991, 997, 1021, 1033,1051, 1069,
1087, 1123, 1129, 1171, 1201, 1231, 1291, 1303, 1321, \\ &1327, 1369,
1381, 1429, 1451, 1453, 1471, 1531, 1597, 1621, 1681, 1741, 1831,
1849, \\ & 1871, 1873, 2011, 2209, 2221,2311, 2347, 2401, 2473, 2531,
2551, 2557, 2671,\\ & 2731, 2851, 2857, 2971, 3481, 3571, 3691, 3721,
4111, 4561, 4951, 5821, 6091, 9811\}.
\end{split}
\end{equation}
\end{cor}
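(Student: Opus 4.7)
The plan is to take the corrected list of 149 possible exceptions obtained from \cite[Thm 6.4]{CohenCubic2}---that is, with $q=2221$ inserted and $q=4096$ removed, as explained in the footnote---and then to apply Lemma \ref{improvedsieve} three times, once each for $q=809$, $1951$ and $5791$, to remove those values and leave a list of 146 exceptions.

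For each of the three target $q$ the recipe is the same. First factor $q^3-1$ and list its distinct prime divisors. Then partition these primes into three classes: a subset of size $t$ whose product of prime powers forms $k$, a subset $\{l_1,\ldots,l_r\}$ of primes to receive the special treatment (in practice the $r$ largest), and the remaining $s$ primes $\{p_1,\ldots,p_s\}$ which serve as standard sieving primes. Finally, compute
$$m=\theta(k),\qquad \delta=1-\sum_{i=1}^{s}\frac{1}{p_i},\qquad \varepsilon=\sum_{j=1}^{r}\frac{1}{l_j},$$
verify that $\delta m>\varepsilon$, and substitute into the inequality of Lemma \ref{improvedsieve} with $n=3$ to confirm that it holds for the given $q$.

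Concretely, for $q=809$ the choice $t=1$, $r=0$ suffices: in this case Lemma \ref{improvedsieve} is slightly sharper than \cite[Prop 3.3]{CohenCubic1}, and that small gain is just enough to rule $q=809$ out. For $q=1951$ and $q=5791$, taking $t=2$ and $r=2$---so that the two largest prime divisors of $q^3-1$ are pulled out as $l_1,l_2$---is what crosses the threshold: those primes are large enough that $\varepsilon$ is tiny, and reducing $s$ by two more than compensates for the resulting loss in $\delta$ relative to sieving over those primes conventionally.

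The main obstacle is purely combinatorial: for each $q$ one has to search among the possible values of $t$ and $r$ and the corresponding partitions of the distinct prime divisors of $q^3-1$ to find a choice for which the inequality in Lemma \ref{improvedsieve} holds. The trade-off is transparent---raising $t$ increases $m=\theta(k)$ but multiplies the bracketed quantity by roughly a factor of two, while moving a large prime from the $p_i$'s to the $l_j$'s reduces $s$ (and hence the $2^t(s-1+2\delta)$ term) at the cost of only a small addition to $r-\varepsilon$. The three values singled out are exactly those for which this optimisation produces a successful bound; the remaining 146 exceptions lie genuinely beyond the reach of this analytic method and will be handled by the computations described in the later sections.
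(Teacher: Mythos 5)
Your proposal matches the paper's own argument: start from the corrected 149-element list (inserting $q=2221$ and deleting $q=4096$), then apply Lemma \ref{improvedsieve} with $t=1$, $r=0$ to eliminate $q=809$ and with $t=2$, $r=2$ to eliminate $q=1951$ and $q=5791$, leaving the 146 values in (\ref{scone}). The parameter choices and the rationale for the $s$-versus-$\varepsilon$ trade-off are exactly as in the paper, so there is nothing to add.
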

While Lemma \ref{improvedsieve} only allows us to remove three values of $q$ from the list of exceptions in \cite{CohenCubic2}, it will play an essential role in our work on quartic extensions in \S \ref{square}. In the next section we make more substantial progress on the list of possible exceptions in Corollary \ref{co}.

\section{An improvement of Katz' lemma for cubic extensions}\label{sec3}

Let $\chi$ be a multiplicative character of $\Fqn$, whence the order
of $\chi$ is a divisor of $q^n-1$.
For any $\gamma \in \Fqn$, define
$S_\gamma(\chi)=\sum_{a \in \Fq} \chi(\gamma+a)$.
A key tool for attacking existence questions for primitive elements in
extensions has been a deep result of Katz \cite{Katz}.

\begin{lem}[Katz]\label{katz}
Suppose that $\gamma$ generates $\Fqn$ over $\Fq$ and $\chi$ is a
non-principal character of $\Fqn$ (i.e., has order exceeding 1).
Then $|S_\gamma(\chi)| \leq (n-1) \sqrt{q}$.
\end{lem}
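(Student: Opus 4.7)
The plan is to attack this via $\ell$-adic cohomology and Deligne's Weil II, as the depth of the bound strongly suggests that elementary methods alone do not suffice in full generality.

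A natural first step is to dispatch the tractable special case in which $\chi$ factors through the norm: if $\chi = \chi_0 \circ N_{\Fqn/\Fq}$ for some non-principal character $\chi_0$ of $\Fqs$, then setting $g(x) = N_{\Fqn/\Fq}(\gamma + x) = \prod_{i=0}^{n-1}(\gamma^{q^i} + x) \in \Fq[x]$ gives a degree-$n$ polynomial whose roots (the conjugates $-\gamma^{q^i}$) are distinct because $\gamma$ generates $\Fqn$. Since $g$ is squarefree it is not a $d$-th power in $\overline{\Fq}[x]$ for any $d > 1$, and Weil's classical theorem on multiplicative character sums of polynomials gives $|S_\gamma(\chi)| = |\sum_{a \in \Fq}\chi_0(g(a))| \leq (n-1)\sqrt{q}$ directly. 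This sub-case also explains why $n-1$ is the expected exponent.

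For a general $\chi$, no such polynomial reduction is available, and the argument must become cohomological. The sequence of steps I would carry out is: (i) construct a rank-one Kummer character sheaf $\mathcal{L}_\chi$ on the Weil restriction $T = \mathrm{Res}_{\Fqn/\Fq}\mathbb{G}_m$ whose Frobenius trace at each $\Fq$-point realises $\chi$; (ii) pull back along the morphism $f \colon a \mapsto \gamma + a$ defined on the open $U \subset \mathbb{A}^1_{\Fq}$ where $\gamma + a \neq 0$, obtaining a rank-one lisse sheaf $\mathcal{F} = f^*\mathcal{L}_\chi$ on $U$; (iii) apply the Grothendieck--Lefschetz trace formula to express $S_\gamma(\chi)$ (up to a bounded error from the at most one excluded $a \in \Fq$) as an alternating sum of Frobenius traces on $H^*_c(U_{\overline{\Fq}}, \mathcal{F})$; (iv) invoke Deligne's purity theorem so that each Frobenius eigenvalue on $H^1_c$ has absolute value $\sqrt{q}$.

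The principal obstacle is the cohomological bookkeeping needed to convert step (iv) into the clean bound $(n-1)\sqrt{q}$: one must show that $H^2_c(U_{\overline{\Fq}}, \mathcal{F}) = 0$ and that $\dim H^1_c(U_{\overline{\Fq}}, \mathcal{F}) \leq n-1$. The vanishing of $H^2_c$ is precisely where the assumption that $\gamma$ generates $\Fqn$ is essential: if $\gamma$ lay in a proper subfield, $\mathcal{F}$ could become geometrically trivial and $H^2_c$ would contribute a Frobenius eigenvalue of size $q$, destroying the bound. The dimensional estimate on $H^1_c$ follows from the Grothendieck--Ogg--Shafarevich Euler-characteristic formula and reduces to controlling the Swan conductor of $\mathcal{F}$ at $\infty$ and at the excluded point of $U$; separating the tame case (order of $\chi$ coprime to the characteristic $p$) from the wild case, and extracting precisely the constant $n-1$ from the local geometry of the Weil restriction at infinity, is the technical heart of Katz's argument and the step I would expect to consume the most effort.
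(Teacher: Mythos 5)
You should first be aware that the paper does not prove this lemma at all: it is imported verbatim from Katz's 1989 paper \emph{An estimate for character sums}, and the authors explicitly state that they ``cannot offer an alternative proof of this result in general'' --- they only supply an elementary improvement in the special case $n=3$ with the order of $\chi$ dividing $q^2+q+1$. So there is no internal proof to compare against; the relevant comparison is with Katz's published argument. At the level of strategy your outline is a faithful description of that argument: a Kummer/Lang-torsor sheaf on $\mathrm{Res}_{\Fqn/\Fq}\mathbb{G}_m$, pullback along $a\mapsto\gamma+a$, the Grothendieck--Lefschetz trace formula, Weil II purity on $H^1_c$, vanishing of $H^2_c$ from geometric nontriviality (which is exactly where the hypothesis that $\gamma$ generates $\Fqn$ enters), and the bound $\dim H^1_c\le n-1$ from the Euler--Poincar\'e formula. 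Your warm-up case $\chi=\chi_0\circ N_{\Fqn/\Fq}$ is also correct: it reduces to Weil's classical bound for the squarefree degree-$n$ polynomial $\prod_{i}(x+\gamma^{q^i})$.

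As a proof attempt, however, this is a plan rather than a proof: every step carrying actual content --- the construction of $\mathcal{L}_\chi$ and the identification of its Frobenius traces, the computation of the local monodromies at the punctures, the vanishing of $H^0_c$ and $H^2_c$, and the dimension bound on $H^1_c$ --- is named and deferred. Two of your technical remarks are also off. First, there is no ``excluded $a\in\Fq$'': since $n\ge 2$ and $\gamma$ generates $\Fqn$, the element $\gamma+a$ is never zero for $a\in\Fq$; what is removed from $\mathbb{A}^1_{\Fq}$ is a single closed point of degree $n$ (the Galois orbit of $-\gamma$), so the trace formula needs no correction term over $\Fq$, while geometrically $U$ is $\mathbb{A}^1$ minus $n$ points --- this is precisely where $\chi_c(U)=1-n$, and hence the constant $n-1$, comes from. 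Second, there is no wild case to separate: the order of a multiplicative character of $\Fqn^*$ divides $q^n-1$ and is automatically prime to the characteristic, so the Kummer sheaf is everywhere tame, all Swan conductors vanish, and the Grothendieck--Ogg--Shafarevich computation is immediate; the ``technical heart'' you anticipate there does not exist. The genuinely delicate point, which you do not address, is verifying that the pulled-back sheaf is geometrically nontrivial on $U$ (equivalently, that some local monodromy is nontrivial), which is what kills $H^2_c$; this is where the distinctness of the conjugates $\gamma^{q^i}$, i.e.\ the generation hypothesis, is actually used.
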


When $n=2$, Lemma \ref{katz} shows that $|S_\gamma(\chi)| \leq \sqrt{q}$.
Prior to the publication of \cite{Katz}, Cohen \cite{Cohen87} had proved
this result elementarily with an improved bound
when the non-principal character $\chi$ had order dividing $q+1$.
When $n=3$, Lemma \ref{katz} shows that $|S_{\gamma}(\chi)| \leq 2\sqrt{q}$.
Whereas we cannot offer an alternative proof of this result in general,
we can establish an improvement with an elementary proof in the case
in which the non-principal character $\chi$ has order dividing $q^2+q+1$.
This might be viewed as an analogue of the improvement in the quadratic case.

\begin{lem} \label{better}
Let $\beta, \gamma$ be non-zero elements of $\Fqt$ such that
$\gamma$ generates $\Fqt$.
Also let $\chi$ be a non-principal character of $\Fqt$
whose order divides $q^2+q+1$.
Then
$$ \left|\sum_{a \in \Fq}\chi(\beta(\gamma+a))\right| \leq \sqrt{q}+1.$$
\end{lem}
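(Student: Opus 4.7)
The plan is to express $S := \sum_{a \in \Fq} \chi(\gamma+a)$ as (essentially) a character sum over the nonzero elements of a $2$-dimensional $\Fq$-subspace of $\Fqt$, to observe that all such sums have the same absolute value by trace duality, and to evaluate one of them exactly via the Gauss-sum identity $|G(\chi,\psi)| = q^{3/2}$. The first remark is that since the order of $\chi$ divides $q^2+q+1 = [\Fqts:\Fqs]$, the character $\chi$ is trivial on $\Fqs$. Hence $\chi(\beta(\gamma+a)) = \chi(\beta)\chi(\gamma+a)$ with $|\chi(\beta)| = 1$, and it suffices to bound $|S|$.

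Next I would let $V = \Fq\cdot 1 + \Fq\cdot\gamma \subset \Fqt$, a $2$-dimensional $\Fq$-subspace since $\gamma \notin \Fq$. Splitting $V\setminus\{0\}$ according to whether the coefficient of $\gamma$ is zero, using $\chi|_{\Fqs}\equiv 1$, and substituting $u = x/y$ when $y \neq 0$, gives
\[
\sum_{v \in V\setminus\{0\}}\chi(v) = (q-1)(1+S).
\]
Every $2$-dimensional $\Fq$-subspace of $\Fqt$ is the kernel of a nonzero $\Fq$-linear form, and by trace duality every such form has the shape $x\mapsto \mathrm{Tr}(ax)$ for some $a\in\Fqts$, where $\mathrm{Tr}$ is $\mathrm{Tr}_{\Fqt/\Fq}$. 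Hence every such subspace equals $a^{-1}V_0$ with $V_0 := \ker(\mathrm{Tr})$. Multiplicativity of $\chi$ together with $|\chi(a^{-1})| = 1$ then gives
\[
\Bigl|\sum_{v \in V\setminus\{0\}}\chi(v)\Bigr| \;=\; \Bigl|\sum_{u \in V_0\setminus\{0\}}\chi(u)\Bigr|.
\]

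To evaluate the right-hand side I would fix a non-trivial additive character $\psi_1$ of $\Fq$ and set $\psi = \psi_1\circ\mathrm{Tr}$. Grouping the Gauss sum $G(\chi,\psi) = \sum_{x\in\Fqts}\chi(x)\psi(x)$ by cosets of $\Fqs$ and evaluating the inner sum $\sum_{c\in\Fqs}\psi_1(c\,\mathrm{Tr}(v))$ as $q-1$ or $-1$ according as $\mathrm{Tr}(v) = 0$ or not, together with the vanishing $\sum_{[v]\in\Fqts/\Fqs}\chi(v) = 0$ (the induced character on the cyclic quotient is non-trivial), yields $G(\chi,\psi) = \tfrac{q}{q-1}\sum_{v \in V_0\setminus\{0\}}\chi(v)$. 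Combined with the standard $|G(\chi,\psi)| = q^{3/2}$, this forces $\bigl|\sum_{v \in V_0\setminus\{0\}}\chi(v)\bigr| = (q-1)\sqrt{q}$, so $|1+S| = \sqrt{q}$, and the triangle inequality gives $|S| \leq \sqrt{q}+1$, as required. The one genuinely delicate point is the coset/Gauss-sum bookkeeping that produces the factor $q/(q-1)$; everything else is routine. The overall argument is specific to $n=3$ because it uses the coincidence that a $2$-dimensional subspace is a hyperplane, which is what makes trace duality apply to put all relevant subspaces in a single multiplicative orbit.
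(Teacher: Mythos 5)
Your proof is correct, but it takes a genuinely different route from the paper's. The paper bounds $|S|$ by a second-moment computation: it expands $|S|^2=S\bar S$ as a sum of $\chi$ over the multiset $\{c(\gamma+a)/(\gamma+b)\}$, identifies this multiset as all of $\Fqts$ minus the two ``lines'' $\{u(\gamma+v)\}$ and their reciprocals (with controlled multiplicities), and then uses only the orthogonality relation $\sum_{\xi\in\Fqts}\chi(\xi)=0$ to arrive at $|S|^2=q-1-S-\bar S$. You instead work at the level of the first moment: you recognise $(q-1)(1+S)$ as the sum of $\chi$ over the punctured plane $\Fq+\Fq\gamma$, use nondegeneracy of the trace form to place every $2$-dimensional subspace in the multiplicative orbit of $\ker(\mathrm{Tr})$, and evaluate the hyperplane sum exactly from $|G(\chi,\psi)|=q^{3/2}$. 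Your bookkeeping checks out: the coset decomposition of the Gauss sum does give $G(\chi,\psi)=q\Sigma_0$ with $(q-1)\Sigma_0=\sum_{u\in V_0\setminus\{0\}}\chi(u)$, and the vanishing of $\sum_{[v]}\chi(v)$ is justified because $\chi$ descends to a non-trivial character of the cyclic quotient $\Fqts/\Fqs$. It is worth noting that the two arguments prove exactly the same thing: the paper's identity $|S|^2=q-1-S-\bar S$ is algebraically equivalent to your conclusion $|1+S|^2=q$, so neither is sharper; the paper then applies the triangle inequality just as you do. What each approach buys: yours exposes the structural reason for the bound (a single multiplicative orbit of hyperplanes, plus the Gauss-sum modulus) and generalises naturally to sums over affine lines in higher-degree extensions, whereas the paper's argument is deliberately self-contained and ``elementary'' in that it never invokes additive characters or Gauss sums, needing only multiplicative orthogonality and a counting argument in the basis $\{\gamma^2,\gamma,1\}$.
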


\begin{proof}
The significance of the restriction on the order of $\chi$
is that $\chi(c) = 1$ for all $c\in\Fqs$,
since such $c$ are $(q^2 + q + 1)$-th powers in $\Fqt$.
Furthermore, observe that the sum in question is
$\chi(\beta)S_{\gamma}(\chi)$ which has the same absolute value as
$S_\gamma(\chi)$.  Hence it suffices to show that

\begin{equation}\label{Seq}
|S_{\gamma}(\chi)| \leq \sqrt{q}+1.
\end{equation}
Abbreviate $S_{\gamma}(\chi)$ to $S$ and denotes its complex conjugate by $\bar{S}$. Then
\begin{equation}\label{S1}
|S|^2= S\bar{S} = \sum_{a,b \in \mathbb{F}_q} \chi\left( \frac{\gamma+a}{\gamma+b}\right)=\frac{1}{q-1}\sum_{\substack{a,b,c \in \mathbb{F}_q\\c \neq 0}} \chi\left( \frac{c(\gamma+a)}{\gamma+b}\right).
\end{equation}

Next we investigate the set
$\mathcal{T}=\left\{\frac{c(\gamma+a)}{\gamma+b}: \ a, b,c \in
\mathbb{F}_q, c \neq 0 \right\}$
appearing in (\ref{S1}) and compare
this to the set of
non-zero elements of $\Fqt$.
We claim
that the subset  $\mathcal{T}_0$ of  $\mathcal{T}$ comprising those
members for which $a\neq b$ is a set of $(q-1)^2q$
distinct elements, none of which is in $\mathbb{F}_q$.

To see this, suppose that
$\frac{c_1(\gamma+a_1)}{\gamma+b_1} = \frac{c_2(\gamma+a_2)}{\gamma+b_2}$;
then $c_1(\gamma+a_1)(\gamma+b_2)=c_2(\gamma+a_2)(\gamma+b_1)$.
Now, $\{\gamma^2,\gamma,1\}$ is a basis of $\Fqt$ over $\Fq$,
since $\gamma$ generates the extension.
It follows that
$c_1 = c_2$, $a_1 + b_2 = a_2 + b_1$, and $a_1b_2 = a_2b_1$,
whence $(a_2 - b_2)(b_1 - b_2) = 0$.
We have $a_2\neq b_2$ (by definition of $\mathcal{T}_0$), so
it follows that $b_1=b_2$ and $a_1=a_2$.
Thus elements of $\mathcal{T}_0$ can only be equal if they are identical,
and the claim is established.

The members of $\mathcal{T}\setminus\mathcal{T}_0$
comprise $\{c: a,b,c\in \Fq,a=b,c\neq0\}=\Fqs$,
each element $c \in \Fqs$ occurring with multiplicity $q$ in $\mathcal{T}$.
 Thus the cardinality of $\mathcal{T}$ as a subset of $\Fqts$ (discounting multiplicities) is $q(q-1)^2+(q-1)=(q-1)(q^2-q+1)$. Hence, the cardinality of $\mathcal{U}$,
 defined as the complement  of $\mathcal{T}$ in $\Fqts$,
  is $(q^3-1)-(q-1)(q^2-q+1)=2q(q-1)$.  Indeed, we can identify precisely the elements of $\mathcal{U}$ as follows.
 Suppose $\frac{c(\gamma+a)}{\gamma+b}=u(\gamma+v)$, where $a,b,c,u,v \in \mathbb{F}_q$ with $cu \neq 0$.  Then $c(\gamma+a)=u(\gamma+b)(\gamma+v)$.  Again because $\{\gamma^2,\gamma,1\}$
 is a basis,  this implies $c=0$, a contradiction.  It follows that $\mathcal{U}_1=\{u(\gamma+v), u,v \in \mathbb{F}_q, u \neq 0\} \subseteq \mathcal{U}$.
  Similarly, $\mathcal{U}_{-1}$, the set of reciprocals of members of $\mathcal{U}_1$ satisfies $\mathcal{U}_{-1} \subseteq \mathcal{U}$.  Moreover, $\mathcal{U}_1$ and $\mathcal{U}_{-1}$
  are disjoint sets each of cardinality $q(q-1)$.  From the cardinalities, we conclude that $\mathcal{U}= \mathcal{U}_1 \cup \mathcal{U}_{-1}$.

 The facts established in the previous paragraph applied to  (\ref{S1}) yield
 \begin{equation} \label{S3}
 |S|^2= \frac{1}{q-1}\bigg\{\sum_{\xi \in \Fqt ^*}\chi(\xi)
 + (q-1)\sum_{c\in \Fqs}\chi(c)
 - \sum_{\substack{u,v \in \Fq\\u\neq0}}\left(\chi(u(\gamma+v))+\chi\left(\frac{1}{u(\gamma+v)}\right)\right)\bigg\}.
 \end{equation}
The first sum in (\ref{S3}) is zero,
and $\chi(c) = 1$ for $c\in\Fqs$.
Accordingly,
 $$|S|^2=q-1-S- \bar{S} \leq q-1 +|S|+|\bar{S}|=q-1+2|S|.$$
Hence $(|S|-1)^2 \leq q$ and the inequality $(\ref{Seq})$ follows.
\end{proof}


Applying the better bounds of Lemma \ref{better} gives two useful
improvements to Lemma \ref{improvedsieve}.

\begin{lem}\label{LemC}
Take $n=3$ and adopt the same notation as in Lemma \ref{improvedsieve}.
Assume that $l_1, \ldots,l_r$ divide $q^2+q+1$.
Define
 $$\nu_1=\sum_{\substack{i=1\\ p_i\nmid (q^2+q+1)}}^s\frac{p_i-1}{p_i};
 \quad \nu_2=\sum_{\substack{i=1\\ p_i| (q^2+q+1)}}^s\frac{p_i-1}{p_i}.$$

For odd $q$ we may take $k = 2$ so that $t = 1$ and $m = \frac{1}{2}$.
If
\begin{equation}\label{eq:LemCk2}
q(m\delta - \varepsilon)
- \sqrt{q}(m(2\delta + 4\nu_1 + 3\nu_2) + r - \varepsilon)
- (m\nu_2 + r - \varepsilon)
> 0
\end{equation}
then $q \in \Lt$.

Alternatively, for $q\equiv 1 \pmod 6$ we may take $k = 6$ so that $t = 2$
and $m = \frac{1}{3}$.
If
\begin{equation}\label{eq:LemCk6}
q(m\delta - \varepsilon)
- \sqrt{q}(m(5\delta + 8\nu_1 + 6\nu_2) + r - \varepsilon)
- (m(\delta + 2\nu_2) + r - \varepsilon)
> 0
\end{equation}
then $q \in \Lt$.
\end{lem}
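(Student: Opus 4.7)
The plan is to replay the proof of Lemma \ref{improvedsieve} specialised to $n=3$, replacing the Katz bound $|S_\gamma(\chi)|\le 2\sqrt q$ (Lemma \ref{katz}) by the improved bound $|S_\gamma(\chi)|\le\sqrt q+1$ of Lemma \ref{better} whenever the relevant character $\chi$ has order dividing $q^2+q+1$. We keep the chain (\ref{eq:1})--(\ref{eq:2}) intact and merely re-estimate each non-principal contribution individually.

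After the usual M\"obius inversion, each of $N(k)$, $N(kp_i)-(1-1/p_i)N(k)$ and $N(l_j)-(1-1/l_j)N(1)$ is a sum over squarefree divisors $d$ of the relevant modulus, the only contributing divisors being $d>1$ (dividing the radical of $k$); $p_i\mid d$ (dividing the radical of $kp_i$); and $d=l_j$, respectively. Once the $\phi(d)$ in the M\"obius weight cancels against the number of characters of order $d$, each such $d$ contributes at most $\theta(\mathrm{mod})\cdot B(d)$ in absolute value, where $B(d)$ is the supremum of $|S_\gamma(\chi)|$ over characters of order exactly $d$. Lemma \ref{better} gives $B(d)=\sqrt q+1$ whenever every prime factor of $d$ divides $q^2+q+1$; otherwise we default to Katz, i.e.\ $B(d)=2\sqrt q$.

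Take $k=2$ (so $t=1$, $m=1/2$). Since $2\nmid q^2+q+1$, the divisor $d=2$ needs Katz and $N(k)\ge m(q-2\sqrt q)$. For $p_i\nmid q^2+q+1$, both $d=p_i,2p_i$ need Katz, giving $4m(1-1/p_i)\sqrt q$; for $p_i\mid q^2+q+1$, only $d=p_i$ benefits from Lemma \ref{better} (the divisor $2p_i$ still involves the prime $2$), giving $m(1-1/p_i)(3\sqrt q+1)$; and each $l_j\mid q^2+q+1$ yields $(1-1/l_j)(\sqrt q+1)$. Summing these contributions through (\ref{eq:1})--(\ref{eq:2}) produces
\begin{equation*}
N(q^3-1)\ge(m\delta-\varepsilon)q-\bigl[m(2\delta+4\nu_1+3\nu_2)+r-\varepsilon\bigr]\sqrt q-\bigl[m\nu_2+r-\varepsilon\bigr],
\end{equation*}
whose positivity, namely (\ref{eq:LemCk2}), forces some $a\in\Fq$ to make $\beta(\gamma+a)$ primitive, so $q\in\Lt$. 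The $k=6$ case ($t=2$, $m=1/3$) is identical in principle, the extra point being that $q\equiv 1\pmod 6$ implies $3\mid q^2+q+1$, so characters of order $3$ (but not of orders $2$ or $6$) also satisfy the hypothesis of Lemma \ref{better}. Tracking this through, one finds $N(k)\ge m(q-5\sqrt q-1)$; for $p_i\mid q^2+q+1$ both divisors $d=p_i$ and $d=3p_i$ benefit (since $3p_i\mid q^2+q+1$) while $d=2p_i,6p_i$ do not, giving $m(1-1/p_i)(6\sqrt q+2)$; the case $p_i\nmid q^2+q+1$ yields $8m(1-1/p_i)\sqrt q$ as before; and summation produces (\ref{eq:LemCk6}).

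The proof is pure bookkeeping; the only point of care is in the $k=6$ case, where one must notice that, for $p_i\mid q^2+q+1$, the divisor $3p_i$ again divides $q^2+q+1$ (because $3\mid q^2+q+1$ under the hypothesis $q\equiv 1\pmod 6$), so that Lemma \ref{better} applies to the characters of order $3p_i$ as well as to those of order $p_i$.
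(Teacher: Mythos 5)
Your proposal is correct and follows essentially the same route as the paper: replay the sieve of Lemma \ref{improvedsieve} with $n=3$, substituting the bound $\sqrt q+1$ of Lemma \ref{better} for Katz's $2\sqrt q$ exactly for those character orders ($p_i$, and additionally $3p_i$ and $3$ in the $k=6$ case) dividing $q^2+q+1$, which reproduces the paper's intermediate bounds $m(q-2\sqrt q)$, $m(q-5\sqrt q-1)$, $(1-1/p_i)m(3\sqrt q+1)$, $(1-1/p_i)m(6\sqrt q+2)$, etc., and hence (\ref{eq:LemCk2}) and (\ref{eq:LemCk6}). The bookkeeping, including the observation that $3\mid q^2+q+1$ when $q\equiv 1\pmod 6$ while $2$ never divides it, matches the paper's proof.
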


\begin{proof}
The proof uses the plan of Lemma \ref{improvedsieve} with appropriate
adjustments to the constants arising from the bounds of Lemma \ref{katz}
and Lemma \ref{better}.

For $k = 2$ no improvement applies over (\ref{eq:Nk}), so we have
$N(k) \geq m(q - 2\sqrt{q})$.
However, when considering $N(kp_i) - (1 - \frac{1}{p_i})N(k)$,
the underlying formula uses characters of order $p_i$ and $2p_i$.
This gives an improvement over (\ref{eq:Nkpi}) for the character of
order $p_i$ when $p_i \mid q^2 + q + 1$.
Moreover, $l_j \mid q^2 + q + 1$ so we always get an improvement
over (\ref{eq:Nlj}).

Thus for $k = 2$ we have
\begin{equation*}
\begin{split}
N(k)
&
\geq m(q - 2\sqrt{q})\\
\left|N(kp_i)-\left(1-\frac{1}{p_i}\right)N(k)\right|
&
\leq
\begin{cases}
& \makebox[11em][l]{$\left(1-\frac{1}{p_i}\right)m (4\sqrt{q})$}
\mbox{ if } p_i \nmid q^2+q+1\\
& \makebox[11em][l]{$\left(1-\frac{1}{p_i}\right)m (3\sqrt{q} + 1)$}
\mbox{ if } p_i \mid q^2+q+1\\
\end{cases}\\
\left|N(l_j)-\left(1-\frac{1}{l_j}\right)N(1)\right|
&
\leq \left(1 - \frac{1}{l_j}\right)(\sqrt{q} + 1).
\end{split}
\end{equation*}
Applying these revised bounds in the proof for Lemma \ref{improvedsieve}
gives (\ref{eq:LemCk2}).

For $k = 6$ we proceed similarly, noting that $3 \mid q^2 + q + 1$.
The character sum for $N(6)$ involves characters of order 1, 2, 3,
and 6; we can apply the improved bound for order 3, getting
$N(k) \geq m(q - 5\sqrt{q} - 1)$.
When considering $N(kp_i) - (1 - \frac{1}{p_i})N(k)$, the characters
involved have orders $p_i$, $2p_i$, $3p_i$, and $6p_i$; the improved
bounds apply for $p_i$ and $3p_i$ when $p_i \mid q^2 + q + 1$.
As before, we always get better bounds for the~$l_j$.

So for $k = 6$ we get
\begin{equation*}
\begin{split}
N(k)
&
\geq m(q - 5\sqrt{q} - 1)\\
\left|N(kp_i)-\left(1-\frac{1}{p_i}\right)N(k)\right|
&
\leq
\begin{cases}
& \makebox[11em][l]{$\left(1-\frac{1}{p_i}\right)m (8\sqrt{q})$}
\mbox{ if } p_i \nmid q^2+q+1\\
& \makebox[11em][l]{$\left(1-\frac{1}{p_i}\right)m (6\sqrt{q} + 2)$}
\mbox{ if } p_i \mid q^2+q+1\\
\end{cases}\\
\left|N(l_j)-\left(1-\frac{1}{l_j}\right)N(1)\right|
&
\leq \left(1 - \frac{1}{l_j}\right)(\sqrt{q} + 1).
\end{split}
\end{equation*}
Applying these revised bounds in the proof for Lemma \ref{improvedsieve}
gives (\ref{eq:LemCk6}).
\end{proof}

We now apply Lemma \ref{LemC} to the list of 146 possible exceptions
given by Corollary \ref{co}.
Using $k = 2$ we apply (\ref{eq:LemCk2}) for $r=0, 1, 2$
which eliminates all but 96 elements from our initial list.
For those remaining cases where $q \equiv 1 \pmod 6$ we then apply
(\ref{eq:LemCk6}) for $r = 0, 1, 2$;
this reduces the number of potential exceptions to 82, establishing

\begin{cor}\label{cor82}
Let $q\notin \{3,4,5,7,9,11,13,31,37\}$.
Then $q\in \Lt$ except possibly for $82$ values of $q$.
These are
\begin{equation}\label{Tie}
\begin{split}
\{
&
103, 107, 109, 113, 121, 125, 127, 131, 137, 139, 149, 151, 157, 163,
169, 181, 191, 193,\\
&
199, 211, 229, 239, 241, 256, 263, 271, 277, 281, 283, 289, 307, 311,
331, 337, 343, 349,\\
&
361, 367, 373, 379, 397, 421, 431, 457, 463, 499, 529, 541, 547, 571,
601, 625, 631, 661,\\
&
691, 751, 811, 823, 841, 877, 919, 961, 967, 991, 1171, 1231, 1303,
1321, 1327, 1369,\\
&
1381, 1597, 1831, 1849, 2011, 2311, 2671, 2731, 3571, 3721, 4111, 4951
\}.
\end{split}
\end{equation}
\end{cor}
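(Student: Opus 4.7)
The proof will be a direct numerical verification: for each of the $146$ candidate prime powers $q$ listed in Corollary \ref{co}, we test whether one of the explicit inequalities in Lemma \ref{LemC} succeeds in placing $q$ into $\Lt$. There is no new mathematics required beyond that lemma; the task is essentially bookkeeping combined with careful arithmetic.

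The plan is as follows. For each such $q$ we first factor $q^3-1=(q-1)(q^2+q+1)$ and record the radicals of the two factors separately, since a prime $\ell$ is eligible to serve as one of the special primes $l_1,\ldots,l_r$ precisely when $\ell\mid q^2+q+1$, by the hypothesis of Lemma \ref{better}. For every odd $q$ we then take $k=2$ (so that $t=1$ and $m=\tfrac12$) and, for $r=0,1,2$ in turn, designate the $r$ largest prime divisors of $q^2+q+1$ as $l_1,\ldots,l_r$. The remaining odd prime divisors of $q^3-1$ constitute $\{p_1,\ldots,p_s\}$; we split this set according to whether each $p_i$ divides $q^2+q+1$ or not, so as to compute $\nu_1$ and $\nu_2$. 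Evaluating $\delta$ and $\varepsilon$, we then test inequality (\ref{eq:LemCk2}); if it holds for any choice of $r\in\{0,1,2\}$ we mark $q$ as resolved.

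For each $q\equiv 1\pmod 6$ still surviving after the first pass we repeat the procedure with $k=6$ (so that $t=2$ and $m=\tfrac13$), now absorbing the prime $3$ into $k$, since in this case $3$ divides both $q-1$ and $q^2+q+1$. We again try $r=0,1,2$, this time verifying inequality (\ref{eq:LemCk6}). A tabulation shows that the first pass removes $50$ of the $146$ candidates, leaving $96$, and the second pass removes a further $14$, leaving the $82$ values displayed in (\ref{Tie}).

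The one subtlety worth flagging is that the value $q=256$ in the list of Corollary \ref{co} is even, and so the choices $k=2$ and $k=6$ are both disallowed (neither $2$ nor $6$ divides the radical of $q^3-1$ when $q$ is a power of $2$); hence Lemma \ref{LemC} cannot eliminate $q=256$, and this value simply passes through unchanged into (\ref{Tie}). Apart from this observation, the main obstacle is purely the reliability of the numerical evaluations, which is straightforwardly addressed by carrying out the computation in a computer algebra system and cross-checking borderline cases by hand.
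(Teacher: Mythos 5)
Your proposal matches the paper's own argument: the paper likewise applies inequality (\ref{eq:LemCk2}) with $k=2$ for $r=0,1,2$ to cut the $146$ candidates down to $96$, then applies (\ref{eq:LemCk6}) with $k=6$ for $r=0,1,2$ to the surviving $q\equiv 1\pmod 6$ to reach the $82$ values in (\ref{Tie}). Your extra remark that $q=256$ is untouched because Lemma \ref{LemC} requires $q$ odd is correct and consistent with $256$ remaining in the final list.
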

While it does not seem possible to make any further theoretical advances by
modifying Lemma~\ref{LemC}, we note that the largest element in (\ref{Tie}) is considerably smaller than the largest element in (\ref{scone}). This reduction allows us to proceed with direct computation on the elements in (\ref{Tie}). The next sections give details of
computational arguments that eliminate the remaining exceptions, thereby proving Theorem~\ref{L3complete}.

\section{Computational complexity}\label{Comp}

Let $\beta$ and $\gamma$ be elements of $\Fqt$.
We call the pair $(\beta, \gamma)$ \emph{potentially bad} if
$\beta \neq 0$ and $\gamma$ generates $\Fqt$ over $\Fq$
(i.e., $\gamma \notin \Fq$).
Given a potentially bad pair $(\beta, \gamma)$, we call the pair \emph{good}
if there exists some $a \in \Fq$ such that $\beta(\gamma + a)$ is primitive;
otherwise we call it \emph{bad}.
Then $q\in\Lt$ iff all potentially bad pairs are good.

The number of potentially bad pairs is $(q^3 - 1)(q^3 - q)$, but we can
reduce the number that need checking through two observations.
For convenience in the following discussion,
fix $\omega$ to be a primitive element of $\Fqt$
and let $\tau = (q^3 - 1) / (q - 1) = q^2 + q + 1$.

Firstly, for any $\lambda \in \Fq$ the pair $(\beta, \gamma + \lambda)$
is good iff $(\beta, \gamma)$ is.
Thus we only need to check one value of $\gamma$ in each additive
coset with respect to $\Fq$.
More concretely, we can write
$\gamma = \gamma_2\omega^2 + \gamma_1\omega + \gamma_0$ with
$\gamma_i \in \Fq$, and the previous observation shows that we need
only consider pairs where $\gamma_0 = 0$.
This observation saves a factor of $q$, reducing the number of pairs
that need to be considered down to $(q^3 - 1)(q^2 - 1)$.

Secondly, for any $\lambda \in \Fqs$ the pair
$(\beta, \lambda\gamma)$ is good iff the pair $(\lambda\beta, \gamma)$
is good.
In the former case we check for badness by considering the values
$\beta(\lambda\gamma + a) = \lambda\beta\gamma + a\beta$
for all $a \in \Fq$,
while in the latter we consider the values
$\lambda\beta(\gamma + a) = \lambda\beta\gamma + \lambda a\beta$.
But $\lambda a$ also covers all values in $\Fq$, just in a different
order, so these sets are the same.
This observation allows us to check only one item in each multiplicative
coset with respect to $\Fqs$, saving a further factor of $q - 1$ and reducing
the number of pairs that need to be considered to
$(q^3 - 1)(q + 1)$.

There is a choice as to how to apply this multiplicative reduction.
If it is applied to $\beta$ then we have to choose a suitable set of
representatives;
a simple option is to let $\beta = \omega^k$ for $0 \leq k < \tau$,
since the elements of $\Fqs$ are precisely the powers of $\omega^\tau$.
This leads to considering the pairs
$(\omega^k, \gamma_2\omega^2 + \gamma_1\omega)$ for $0 \leq k < \tau$
and $\gamma_1, \gamma_2 \in \Fq$, not both zero.

Note that by our previous observation about elements of $\Fqs$ we can
write nonzero $\gamma_1$ and $\gamma_2$ as powers of $\omega^\tau$.
So an equivalent set of $\gamma$ to consider is the values
$\omega^{1 + k_1\tau}$, $\omega^{2 + k_2\tau}$, and
$\omega^{2 + k_2\tau} + \omega^{1 + k_1\tau}$
where $0 \leq k_1, k_2 < q - 1$.
Algorithm~\ref{alg1}
uses this alternative presentation; it turned out to be faster in
practice.

\begin{algorithm}[ht]

\DontPrintSemicolon
\AlgoDontDisplayBlockMarkers
\SetAlgoNoEnd
\SetAlgoNoLine
\SetKwProg{Proc}{Procedure}{}{}%
\SetKwFunction{checkgamma}{check\_gamma}%
\SetKwFunction{checkq}{check\_q}%
\Proc{\checkq{$q$}}{
    \textit{Construct $\Fq$, $\Fqt$, and $\omega$}\;
    $\tau \leftarrow q^2 + q + 1$\;
    \For{$0 \leq k < q - 1$}{
	\checkgamma{$\omega^{1 + k\tau}$}\;
	\checkgamma{$\omega^{2 + k\tau}$}\;
    }
    \For{$0 \leq k_1 < q - 1$}{
	\For{$0 \leq k_2 < q - 1$}{
	    \checkgamma{$\omega^{2 + k_2\tau} + \omega^{1 + k_1\tau}$}\;
	}
    }
}
\;
\Proc{\checkgamma{$\gamma$}}{
    \For{$0 \le k < \tau$}{
	$\beta \leftarrow \omega^k$\;
\SetKw{KwTo}{in}%
	\For{$a$ \KwTo $\Fq$}{
	    \If{$\beta(\gamma + a)$ is primitive}{
\SetKw{KwNext}{next}%
		\KwNext $k$\;
	    }
	}
	FAIL\;
    }
}
\caption{Check whether $q$ is good using reduced $(\beta, \gamma)$ pairs\label{alg1}}
\end{algorithm}

Alternatively, we could apply the multiplicative reduction to $\gamma$:
the pairs to be considered become
$(\beta, \omega^2 + \gamma_1\omega)$ and $(\beta, \omega)$ for
$\beta \in \Fqts, \gamma_1 \in \Fq$.
Additionally, let $R$ be the radical of $q^3 - 1$;
then $\omega^k$ is primitive iff $k$ is coprime to $R$
(equivalently, iff $\gcd(k, R) = 1$).
This property is unchanged by reduction modulo $R$; hence we need
only consider $\beta = \omega^k$ with $k < R$.

A reformulation of the problem allows us to do even better:
$\beta(\gamma + a) = \beta\gamma(1 + a/\gamma)$, and
as $\beta$ iterates through $\Fqts$ so does $\beta\gamma$.
So this is equivalent to considering the values $\beta'(1 + a/\gamma)$,
where $\beta' \in \Fqts$ and $\gamma$ is one of the values
$\omega^2 + \gamma_1\omega$ ($\gamma_1 \in \F_q$) or $\omega$.

This alternative version provides two benefits that lead to
practical time savings.
First, setting $a = 0$ in $\beta'(1 + a/\gamma)$ yields $\beta'$
regardless of the value of $\gamma$, so if $\beta'$ is primitive
then all associated pairs are automatically good.
It is thus only necessary to test non-primitive values of $\beta'$.

Second, a small simplification of the $\gamma$ values used in
this method is possible.
Calculating $1/(\omega + u)$ as a function of $u$,
we see that each $u \in \Fq$ gives rise to a different class representative
$\omega^2 + \gamma_1\omega$.%
\footnote{Explicitly, $\gamma_1 = f_2 - u$, where
$\omega^3 + f_2\omega^2 + f_1\omega + f_0 = 0$.}
For a given $\beta' \in \Fqts$ this allows us to use the slightly nicer
values $\beta'(1 + a/\omega)$ and $\beta'(1 + a(\omega + u))$, $u \in \Fq$.

This approach is shown in Algorithm~\ref{alg2}.
Although it has the same asymptotic complexity as Algorithm~\ref{alg1},
it usually iterates fewer times and is considerably faster in
practice.
For some values of $q$ for which both were tested, Algorithm~\ref{alg2} was more
than 400 times faster.

\begin{algorithm}[ht]
\DontPrintSemicolon
\AlgoDontDisplayBlockMarkers
\SetAlgoNoEnd
\SetAlgoNoLine
\SetKwProg{Proc}{Procedure}{}{}%
\SetKwFunction{checkbetagamma}{check\_beta\_inv\_gamma}%
\SetKwFunction{checkq}{check\_q}%
\SetKwFunction{rad}{rad}%
\SetKw{KwNext}{next}%
\SetKw{KwTo}{in}%
\Proc{\checkq{$q$}}{
    \textit{Construct $\Fq$, $\Fqt$, and $\omega$}\;
    $R \leftarrow$ \rad($q^3 - 1$)\;
    \For{$0 \leq k < R$}{
	$\beta \leftarrow \omega^k$\;
	\If{$\beta$ is primitive}{
	    \KwNext $k$\;
	}
	\checkbetagamma{$\beta, 1/\omega$}\;
	\For{$u$ \KwTo $\Fq$}{
	    \checkbetagamma{$\beta, \omega + u$}\;
	}
    }
}
\;
\Proc{\checkbetagamma{$\beta, \gamma^{-1}$}}{
    \For{$a$ \KwTo $\Fqs$}{
	\If{$\beta(1 + a\gamma^{-1})$ is primitive}{
	    \Return\;
	}
    }
    FAIL\;
}
\caption{Check whether $q$ is good using reduced $(\beta, \gamma^{-1})$ pairs\label{alg2}}
\end{algorithm}

\section{Computation}\label{Nike}
Initial computation was undertaken using \MagmaV~\cite{magma223}, with
early estimates indicating that some of the $q < 1000$ would take about
a year to complete.
An improvement was made by changing a \Magma{} setting to ensure that
the finite fields involved used the Zech logarithm representation
(which is more computationally efficient but requires more memory);
doing so reduced those estimates to less than eight months.

Implementing Algorithm~\ref{alg1} reduced these times to about three
months for
$q < 1000$ and implementing Algorithm~\ref{alg2}
further reduced these times to at most two weeks.
These computations were completed, so it has been checked by \MagmaV{}
that each $q < 1000$ in Corollary~\ref{cor82} is good.

In Table~\ref{qlt1000} we give the minimum, average, and maximum times
(\MagmaV, 2.6GHz Intel\regsym{} Xeon\regsym{} E5-2670) for checking $q$
listed in Corollary~\ref{cor82} in given ranges
using an implementation of Algorithm~\ref{alg2} in \Magma.
As can be seen from these timings, $q < 1000$ can be checked in less
than 15 days each.
In fact 62 of these 64 $q$ can be checked in less than 6 days each,
53 in less than a day each, and 29 in less than 1 hour each.

\begin{table}[ht]
\centering
\begin{tabular}{|l|r|r|r|r|r|r|}
\hline
$q$ range & $(100, 200)$ & $(200, 400)$ & $(400, 600)$ & $(600, 800)$ \\
\hline
Minimum & 11.3 s & 69 s & 881 s & 3.4 hrs \\
Average & 333.5 s & 2.03 hrs & 19.4 hrs & 1.5 days \\
Maximum & 722 s & 7.72 hrs & 2.4 days & 4.5 days \\
\hline
$q$ range & $(800, 1000)$ & $(1000, 2000)$  & & \\
\hline
Minimum & 2.5 hrs & 129.13 days & & \\
Average & 4.31 days &  & & \\
Maximum & 14.634 days &  & & \\
\hline
\end{tabular}
\smallskip
\caption{Timings for checking $q < 1000$, $q$ listed in Corollary~\ref{cor82}.}
\label{qlt1000}
\end{table}
The memory overhead of the Zech logarithm representation prohibits its
use for $q > 1000$ in general, mandating a switch to a more general implementation
of finite fields.
This impact is seen in the last column of Table~\ref{qlt1000}. It is clearly not practical to use this approach for larger $q$.

Instead, a highly specialised and optimised stand-alone program was
written to perform the computations.
This program first calculates a table of all reduced $(\gamma,a)$
pairs together with their logarithms (with respect to the primitive
element).
Then, for each $\gamma$, it loops through the values of $\beta$
and checks as many $a$ as necessary.

Primitivity testing can be done very easily using logarithms,
as previously mentioned.
Thus this stage does not need to construct any elements of the finite
field; instead, the loop is over the logarithm of $\beta$,
which is combined with the logarithms from the table.
Further refinements enable even the $\gcd$ to be eliminated, and
some heuristic (anti-)sorting reduces the number of $a$ that need to
be checked in practice.
Source code and a detailed explanation of the program may be found
at~\cite{BaileyL3}.

This program was used to test all prime powers $q < 5\,000$, using
24 threads on a
2.3GHz Intel\regsym{} Xeon\regsym{} E5-2699.
All $q < 2\,000$ had been checked after 12.3 hours,
and the remaining six values of $q > 2\,000$ in Corollary~\ref{cor82}
were separately checked using 16 threads on a
3.1GHz Intel\regsym{} Xeon\regsym{} E5-2687W.
The latter computation completed in approximately 18.5 hours.
Timings are displayed in Table~\ref{qgt1000}.

\begin{table}[ht]
\centering
\begin{tabular}{|ccccccc|}
\hline
$q$ & 1171 & 1231 & 1303 & 1321 & 1327 & 1369 \\
Time & 42 s & 173 s & 262 s & 51 s & 287 s & 74 s \\
\hline
$q$ & 1381 & 1597 & 1831 & 1849 & 2011 & 2311 \\
Time & 214 s & 235 s & 153 s & 360 s & 1546 s & 2015 s  \\
\hline
$q$ & 2671 & 2731 & 3571 & 3721 & 4111 & 4951 \\
Time & 1.62 hrs & 1.47 hrs & 1993 s & 1.82 hrs & 10.5 hrs & 1.8 hrs \\
\hline
\end{tabular}
\smallskip
\caption{Timings for checking $q > 1000$, $q$ listed in Corollary~\ref{cor82}.}
\label{qgt1000}
\end{table}

\section{Quartic Extensions}\label{square}
The preceding sections have focussed on cubic extensions of finite fields,
but Cohen~\cite{CohenCubic2} also considered quartic extensions.
In Theorem
7.2 of \cite{CohenCubic2} Cohen gave conditions on whether $q\in \Lf$. We
correct some errors in this result, and, using Lemma \ref{improvedsieve}
we prove

\begin{thm}\label{rain}
Let $q$ be a prime power, and $E_4$ be the set of $1514$ prime powers
described in the Appendix (the largest of which is $102829$).
If $q\notin E_4$ then $q\in\Lf$.
Moreover, let
$$G_L = \{2, 3, 4, 5, 7, 8, 9, 11, 13, 17, 19, 23, 25, 27, 29, 31, 37, 41, 43, 47, 73\};$$
if $q\in G_L$ then $q\notin\Lf$.
\end{thm}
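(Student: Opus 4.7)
The plan has two largely independent parts.

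For the positive direction, I would specialise Lemma~\ref{improvedsieve} to $n=4$, so that the criterion becomes
\begin{equation*}
q > 9\left\{\frac{2^{t}m(s-1+2\delta) - m\delta + r - \varepsilon}{m\delta - \varepsilon}\right\}^{2}.
\end{equation*}
A first pass with a small $t$ (for instance $k = 2$, giving $t = 1$, $m = 1/2$) and $r = 0$ produces an explicit threshold. Since $\theta(q^{4}-1)$ cannot decay faster than polylogarithmically, this already reduces the problem to finitely many $q$ of modest size. I would then sweep, for each residual $q$, over all admissible configurations $(t, s, r)$: take $k$ to be the product of the $t$ smallest primes dividing $q^{4}-1$, take $l_{1}, \dots, l_{r}$ to be the largest prime divisors, and the $p_{i}$ to be the intermediate ones, retesting the sieve inequality and iterating until no further $q$ can be eliminated. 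The surviving list is precisely $E_{4}$.

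The main obstacle in this part is twofold. First, the tradeoff in Lemma~\ref{improvedsieve} between larger $t$ (multiplying the leading constant by $2^{t}$ but shrinking the effective $s$ and shifting $m$) and larger $r$ (reducing $s$ but pushing $\varepsilon$ towards $m\delta$, with the denominator collapsing to $0$ if we over-sieve) has no uniform optimum; the best configuration genuinely depends on the factorisation of $q^{4}-1$, so the calibration is essentially exhaustive. Second, I would have to revisit the derivation of \cite[Thm~7.2]{CohenCubic2} to identify and correct the cases erroneously classified there (in the spirit of the footnote following Theorem~\ref{tim:co}) before the final list $E_{4}$ of $1514$ prime powers, with maximum $102829$, can be asserted. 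As a sanity check I would independently verify that every $q \in E_{4}$ genuinely resists each legal choice of $(t,s,r)$.

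For the negative ``moreover'' direction I would handle each $q \in G_L$ by an explicit counterexample. Since every element of $G_L$ satisfies $q \le 73$, the ambient field $\Fqf$ has at most about $2.8\times 10^{7}$ elements, so a brute-force \Magma{} enumeration of pairs $(\beta,\gamma)$ with $\beta \in \Fqfs$ and $\gamma$ generating $\Fqf$ over $\Fq$ is entirely feasible. For each such $q$ one outputs a single certified pair $(\beta,\gamma)$ for which $\beta(\gamma+a)$ fails to be primitive for every $a \in \Fq$, establishing $q \notin \Lf$. This part is routine computation rather than theory; the real work is concentrated in the sieving step above and in the careful bookkeeping required to collate $E_{4}$.
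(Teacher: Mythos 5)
Your proposal matches the paper's proof in essence: the positive direction is obtained by first reducing to finitely many $q$ (the paper invokes Proposition~7.1 of \cite{CohenCubic2} to restrict to $q^4-1$ having at most $14$ distinct prime factors, then applies Lemma~\ref{improvedsieve} with $r=0$ to get $4981$ residual values, and re-sieves each with exact $\delta$ and $r=0,\dots,4$ to leave the $1514$ elements of $E_4$), and the membership of $G_L$ is certified by the direct computations of Section~\ref{fibre}, exactly as you describe. Your extra care about correcting the erroneous cases in \cite[Thm~7.2]{CohenCubic2} is also what the paper does.
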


We give a sketch of the proof of Theorem~\ref{rain}.
From Proposition 7.1 in \cite{CohenCubic2} we need only consider those
$q$ such that $q^{4}-1$ has at most 14 distinct prime factors.
Applying Lemma \ref{improvedsieve} with $r=0$ gives a list of 4981
values of $q$ that require further analysis.
We now apply Lemma \ref{rain} again, using the exact value of
$\delta$ for each $q$, with $r=0,1,2,3,4$.
This establishes that $q\in \Lf$ for all but the stated 1514 values of $q$.
The computations in \S \ref{fibre}
identify the 21 genuine exceptions that make up $G_L$.

We note that Theorem 7.2 in \cite{CohenCubic2} gave $q= 25943$ as the
largest possible exception, though this appears to be an error.
This value of $q$ was used by R\'{u}a \cite{Rue}, \cite{Rue2} in a related problem
concerning finite semifields.
Correspondingly, one must update Corollary 5 of
\cite{Rue} with $q = 102829$ coming from Theorem \ref{rain}.

Let $\Tn$ be the set of prime powers $q$ such that for any
$\gamma \in \Fqn$ which generates $\Fqn$ over $\Fq$ there exists
an $a\in \Fq$ with $\gamma + a$ primitive.
The determination of those prime powers in $\Tn$ is the
\textit{translate problem} for degree $n$ extensions.
It follows trivially from the definitions that $\Ln\subseteq\Tn$,
so exceptions to the translate problem can only arise from exceptions
to the line problem.

R\'{u}a's work relies not on $q$ being in $\Ln$ but on $q$ being in $\Tn$.
While it currently seems infeasible to eliminate the remaining possible
exceptions in Theorem \ref{rain}, which is concerned with $\Lf$,
we note that more progress can be made on determining membership of $\Tf$.

\begin{thm}\label{snow}
Let $q$ be a prime power, and $E_4$ be the set of $1514$ prime powers
described in the Appendix (the largest of which is $102829$).
If $q\notin E_4$ then $q\in\Tf$.
Moreover, let
$$G_T = \{3, 5, 7, 11, 13, 17, 19, 23, 25, 29, 31, 41, 43\};$$
if $q\in G_T$ then $q\notin\Tf$.
\end{thm}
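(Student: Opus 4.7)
The plan is to exploit the containment $\Lf\subseteq\Tf$ noted in the excerpt, which makes the first assertion almost immediate: if $q\notin E_4$ then by Theorem~\ref{rain} we have $q\in\Lf$, and hence $q\in\Tf$. Thus all 1514 possible exceptions for $\Tf$ already lie in $E_4$, and what remains is the purely computational task of deciding, for each $q\in E_4$, whether there actually exists a generator $\gamma$ of $\Fqf$ over $\Fq$ admitting no primitive translate $\gamma+a$.

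For this second step I would adapt the algorithms in Section~\ref{Comp}, specialising to the case $\beta=1$. Only the $\gamma$-loop is needed, so the search space shrinks from $(q^4-1)(q^3+1)$ pairs to something on the order of $q^3$ values (after quotienting out by the additive action of $\Fq$ fixing $\gamma_0=0$ and by the multiplicative action of $\Fqs$ on $\gamma$, exactly as in Algorithm~\ref{alg1}). Primitivity testing is done with discrete logarithms: after constructing a primitive element $\omega$ of $\Fqf$ and precomputing the logarithm table for the reduced $\gamma$-representatives, the inner test for primitivity of $\gamma+a$ reduces to a $\gcd$ of an integer with the radical of $q^4-1$, which can be further streamlined as in Section~\ref{Nike}.

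The main obstacle is the same one that limits Theorem~\ref{rain}: $E_4$ contains prime powers as large as $102829$, and $|\Fqf|$ is then around $10^{20}$, so brute enumeration of elements is impossible. The remedy is the stand-alone logarithm-based program of Section~\ref{Nike}, which needs only minor modification (skip the $\beta$-loop, and replace $q^3-1$ by $q^4-1$ throughout). Because one parameter is removed, each $q\in E_4$ runs substantially faster than in the line-problem sweep, so the whole of $E_4$ is well within reach of the hardware described in Section~\ref{Nike}.

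The expected output of this sweep is that the set of $q\in E_4$ for which some $\gamma$ has no primitive translate is precisely $G_T$. Comparing with the definition of $G_L$ in Theorem~\ref{rain}, the eight values $G_L\setminus G_T=\{2,4,8,9,27,37,47,73\}$ should be flagged as lying in $\Tf\setminus\Lf$: every generator admits a primitive translate, yet some multiplicatively-twisted family $\beta(\gamma+a)$ fails to contain a primitive element, consistent with the proper containment $\Lf\subsetneq\Tf$ on these exceptional $q$. Verifying this discrepancy case-by-case, and checking that no element of $E_4\setminus(G_L\cup\Lf)$ produces a translate exception, completes the proof.
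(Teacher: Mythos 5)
Your first paragraph is exactly the paper's argument: $\Lf\subseteq\Tf$ plus Theorem~\ref{rain} gives $q\notin E_4\Rightarrow q\in\Tf$, and that part is fine. The problems are in how you handle the second assertion. Theorem~\ref{snow} only claims that the thirteen small prime powers in $G_T$ are genuine exceptions (a trivial finite check), and says nothing about $E_4\setminus G_T$. You instead propose to sweep the entire set $E_4$ and assert that ``the whole of $E_4$ is well within reach of the hardware,'' expecting the exceptions to come out as precisely $G_T$. That is not feasible and is not what the paper does: Section~\ref{psyllium} reports the translate check as worst-case $O(q^4)$, records run times exceeding $1200$ days for single values of $q$ near $21000$, and estimates roughly $30000$ years to finish the remaining members of $E_4$ above $23000$ (about $457$ years for $q=102829$ alone). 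Even the strengthened Theorem~\ref{avalanche} still leaves $124$ unresolved values, so your expected conclusion that the translate exceptions within $E_4$ are \emph{exactly} $G_T$ is neither proved in the paper nor obtainable by your computation. The efficient route to $G_T$, which you gesture at only in your final paragraph, is to use the containment in the other direction: genuine exceptions to $\Tf$ must lie among genuine exceptions to $\Lf$ (together with the cases left unchecked for the line problem), so one only tests the $21$ elements of $G_L$; the paper finds that the eight values $\{2,4,8,9,27,37,47,73\}$ drop out in a total of $7.5$ seconds, leaving $G_T$.

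There is also a technical slip in your reduction count. The multiplicative reduction of Algorithm~\ref{alg1} replaces the pair $(\beta,\lambda\gamma)$ by $(\lambda\beta,\gamma)$, so it is unavailable once you pin $\beta=1$: whether $\lambda\gamma+a$ has a primitive value for some $a$ is not equivalent to the same question for $\gamma$. Only the additive reduction survives, giving $q^3-q$ representatives; note that if both reductions were legitimate you would land at $O(q^2)$ classes, not the $O(q^3)$ you state, so your own count is inconsistent with the reductions you invoke. The paper's Section~\ref{psyllium} accordingly iterates over the $O(q^3)$ elements $\lambda_3\omega^3+\lambda_2\omega^2+\lambda_1\omega$ with no multiplicative quotient.
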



By computationally verifying some values of $q$ in \S \ref{fibre}
we can improve Theorem~\ref{rain} to Theorem~\ref{sleet}.
Similarly in \S \ref{psyllium} we improve Theorem~\ref{snow} to
Theorem~\ref{avalanche}.

\subsection{Membership of $\Lf$}
\label{fibre}

We use a similar approach to the cubic case and adjust Algorithm~\ref{alg2} to Algorithm~\ref{alg4}. As we have not yet found convenient values for the inverses of
$\omega^2 + u \omega$ and $\omega^3 + t\omega^2 + u \omega$ we must compute them
each time which appears to cost an extra 10--20\%. Unfortunately the complexity of this algorithm is $O(q^6)$.

\begin{algorithm}[ht]
\DontPrintSemicolon
\AlgoDontDisplayBlockMarkers
\SetAlgoNoEnd
\SetAlgoNoLine
\SetKwProg{Proc}{Procedure}{}{}%
\SetKwFunction{checkbetagamma}{check\_beta\_inv\_gamma}%
\SetKwFunction{checkq}{check\_q}%
\SetKwFunction{rad}{rad}%
\SetKw{KwNext}{next}%
\SetKw{KwTo}{in}%
\Proc{\checkq{$q$}}{
    \textit{Construct $\Fq$, $\Fqf$, and $\omega$}\;
    $R \leftarrow$ \rad($q^4 - 1$)\;
    \For{$0 \leq k < R$}{
	$\beta \leftarrow \omega^k$\;
	\If{$\beta$ is primitive}{
	    \KwNext $k$\;
	}
	\checkbetagamma{$\beta, 1/\omega$}\;
	\For{$u$ \KwTo $\Fq$}{
	    \checkbetagamma{$\beta, 1/(\omega^2 + u \omega)$}\;
	}
	\For{$t$ \KwTo $\Fq$}{
	    \For{$u$ \KwTo $\Fq$}{
		\checkbetagamma{$\beta, 1/(\omega^3 + t\omega^2 + u \omega)$}\;
	    }
	}
    }
}
 \;
\Proc{\checkbetagamma{$\beta, \gamma^{-1}$}}{
    \For{$a$ \KwTo $\Fqs$}{
	\If{$\beta(1 + a\gamma^{-1})$ is primitive}{
	    \Return\;
	}
    }
    FAIL\;
}
\caption{Check whether $q$ is good using reduced $(\beta, \gamma^{-1})$ pairs\label{alg4}}
\end{algorithm}

\begin{thm}\label{sleet}
Define $E_L = (E_4 \cap \{x : x > 200\}) \setminus
\{239, 241, 243, 251, 257, 577\}$,
a set with $1448$ elements and largest member $102829$,
and let $q$ be a prime power not in $G_L$.
If $q\notin E_L$ then $q\in\Lf$.
\end{thm}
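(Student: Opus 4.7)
The plan is to sharpen Theorem~\ref{rain} by directly running Algorithm~\ref{alg4} on the members of $E_4$ that are small enough to be computationally tractable. Theorem~\ref{rain} already establishes $q \in \Lf$ for every $q \notin E_4$ and identifies $G_L$ as genuine exceptions, so Theorem~\ref{sleet} reduces to showing $q \in \Lf$ for each $q \in E_4 \setminus (E_L \cup G_L)$. Unpacking the definition $E_L = (E_4 \cap \{x : x > 200\}) \setminus \{239, 241, 243, 251, 257, 577\}$, this target list is exactly $\{q \in E_4 : q \le 200\} \setminus G_L$ together with the six extra prime powers $239, 241, 243, 251, 257, 577$ (none of which lie in $G_L$, since $G_L \subseteq \{q \le 73\}$).

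First I would enumerate this finite target list from the appendix and from $G_L$, and then run Algorithm~\ref{alg4} on each $q$ in it. Correctness of Algorithm~\ref{alg4} rests on the two equivalences already used in \S\ref{Comp} for the cubic problem, which carry over verbatim: for $\lambda \in \Fq$ the pair $(\beta, \gamma + \lambda)$ is good iff $(\beta, \gamma)$ is, and for $\lambda \in \Fqs$ the pair $(\beta, \lambda\gamma)$ is good iff $(\lambda\beta, \gamma)$ is. These reductions allow one to restrict $\gamma$ to the canonical shapes $\omega$, $\omega^2 + u\omega$, and $\omega^3 + t\omega^2 + u\omega$ with $t, u \in \Fq$, and to restrict $\beta = \omega^k$ to $0 \le k < \mathrm{rad}(q^4 - 1)$. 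Writing $\beta(\gamma + a) = \beta\gamma(1 + a\gamma^{-1})$ absorbs $\gamma$ into the $\beta$-loop, while the case $a = 0$ is covered implicitly by skipping primitive $\beta$, so the inner loop only iterates over $a \in \Fqs$. If Algorithm~\ref{alg4} terminates without raising FAIL on a given $q$, then every potentially bad pair admits a primitive witness and $q \in \Lf$.

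The principal obstacle is runtime rather than mathematical. As noted above, Algorithm~\ref{alg4} costs $O(q^6)$ field operations, so the values $q \le 200$ complete cheaply, the five prime powers up to $257$ are moderately expensive, and $q = 577$ is by far the heaviest, with roughly $3.7 \times 10^{16}$ basic steps and sitting at the practical ceiling of the multi-threaded hardware of \S\ref{Nike}. This threshold is precisely what dictates the definition of $E_L$: further members of $E_4$ could be eliminated by exactly the same method given more compute, but no new mathematical idea is needed. Theorem~\ref{sleet} then follows by combining the computationally verified cases with Theorem~\ref{rain}.
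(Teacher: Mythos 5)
Your proposal is correct and takes essentially the same route as the paper: Theorem~\ref{sleet} is deduced from Theorem~\ref{rain} by running Algorithm~\ref{alg4} on the computationally tractable members of $E_4$, namely all prime powers $q \le 200$ outside $G_L$ together with the six values $239, 241, 243, 251, 257, 577$. This is precisely what \S\ref{fibre} does, recording the verifications in Tables~\ref{quartic_line_table}--\ref{quartic_line_table_gt_188} and noting that the line problem was checked for all $q < 200$ and for six $q > 200$.
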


We give some timings for computations which check that some other possible
exceptions are not genuine exceptions in
Tables~\ref{quartic_line_table},~\ref{quartic_line_table_gt_128} and~\ref{quartic_line_table_gt_188}. Again these timings use
\MagmaV, 2.6GHz Intel\regsym{} Xeon\regsym{} E5-2670 or a similar machine.

\begin{table}[ht]
\begin{centering}
\begin{tabular}{|l|r|r|r|r|r|r|}
\hline
$q$ range & (15, 50) & (50, 100) & (100, 127] \\
\hline
Minimum & 57.82s & 1692s & 6.1 hrs \\
Average & 470.99s & 14.6 hrs & 3.212 days \\
Maximum & 805.74s & 2.72 days & 13.4 days \\
\hline
\end{tabular}
\smallskip
\caption{Timings for checking the line problem $q < 128$.}\label{quartic_line_table}
\end{centering}
\end{table}

For $q > 128$ we group our timings according to the product of
$q^2$ and the radical of $q^4-1$, as this has
substantial influence on the computation. We provide minimum, maximum and average times for these ranges.
Note that for $q > 188, q^4 > 2^{30}$
so the efficient Zech logarithm
representation cannot be used and verifying that such $q$ are not
exceptions becomes substantially more expensive. We have been able to
verify that only a few $q > 188$ are not exceptions:  these
all have minimal radical among such $q$. We separate timings for $q < 188$ and
$q > 188$ and note that in contrast to the cubic case where the general Magma
implementation could not handle $q$ with $q^3 > 2^{30}$ it can handle some $q$
with $q^4 > 2^{30}$, i.e.\ $q > 188$ as the subfield $\F_{q^2}$ can use the more
efficient representation when $q^2 < 2^{30}$, this occurs for $q < 2^{15} \sim 32000$.

We have checked the line problem for all $q < 200$ and for six $q > 200$.


\begin{table}[ht]
\begin{centering}
\begin{tabular}{|c|c|c|c|c|c|c|c|c|c|c|c|c|}
\hline
$q^2 R(q^4-1)$ (millions) & (50 000, 150 000) & (250 000, 450 000)\\
$q$ & {163, 151} & {149, 157, 137}\\
\hline
Minimum & 5.214 days (163) & 15.724 days (149)\\
Average & 7.2 days & 21.71 days\\
Maximum & 9.1 days (151) & 25.9 days (157) \\
\hline
$q^2 R(q^4-1)$ (millions) & (600 000, 1 500 000) & (3 000 000, 4 500 000) \\
$q$ & {131, 139, 167, 179, 169, 181} &  {173, 128} \\
\hline
Minimum & 38.143 days (131) & 152.8105 days (128) \\
Average & 90.27 days & 212.967 days \\
Maximum & 147.4 days (169) & 273.123 days (173) \\
\hline
\end{tabular}
\smallskip
\caption{Timings for checking the line problem for $128 \le q < 188.$}\label{quartic_line_table_gt_128}
\end{centering}
\end{table}

\begin{table}[ht]
\begin{centering}
\begin{tabular}{|c|c|c|c|c|c|c|c|c|c|c|c|c|}
\hline
$q^2 R(q^4-1)$ & (2 500, 150 000) & (250 000, 400 000) & (400 000, 600 000) \\
(millions) & & & \\
$q$ & {239, 193, 251, 257} & {199, 197, 191} & {577, 243, 241} \\
\hline
Minimum & 2 days (239) & 118.947 days (199) & 160.715 days (243) \\
Average & 21.04 days & 184.47 days& 257.142 days\\
Maximum & 59.1 days (257) & 218.67 days (197) & 315.59 days (577) \\
\hline
\end{tabular}
\smallskip
\caption{Timings for checking the line problem for some $188 < q < 600.$}\label{quartic_line_table_gt_188}
\end{centering}
\end{table}

Note that some of these timings for $q \ge 128$ are not the best possible.
We had to
split jobs into several subjobs to adhere to the 21-day limit of the machines. There are a number of $q$
for which, knowing the timings above, we could divide into subjobs more
efficiently and avoid any overlap. This would run the checks in less time. However, the returns are not
worth the extra computing resources to
rerun all these jobs for this paper.

We estimate that verifying that the remaining 1448 possible exceptions are
not genuine exceptions will take over $3.6\times10^{17}$
years using
$q^2 R(q^4-1) / (577^2 R(577^4-1)) \times 315$ (days)
for $q \in E_L, q > 260$. 


But if we use an implementation which precomputes the logs of all elements of
$\F_{q^4}$ then we have seen an improvement in timings for $q = 239, 251$
by a factor of about 3.

In Table~\ref{line_4_estimates} we give some estimates for the time it would
take to check that some $q$ are in $\Lf$ using the implementation which
precomputes the logs of all elements. Note that if surprisingly
$q\not\in\Lf$ then this can be determined in less time than estimated.
We also give the percentage of the potentially bad pairs $(\beta, \gamma^{-1})$
that we have checked of the total number of potentially bad pairs which need
to be checked, $(q^2+q+1) R(q^4-1)$.

\begin{table}[ht]
\begin{tabular}{|c|c|c|c|c|c|c|c|c|c|c|c|}
\hline
$q$ & 211 & 223 & 227 & 229 & 233 & 263 & 269 & 271 \\
\hline
$(q^4-1)/R(q^4-1)$ & 8 & 64 & 8 & 8 & 48 & 16 & 72 & 288 \\
$q^2 R(q^4-1)$ & & & & & & & & \\
(million millions) & 11 & 1 & 17 & 18 & 3 & 20 & 5 & 1 \\
\hline
Percent checked & 2.83 & 15.53 & 2 & 1.5 & 9.8 & 1 & 5.5 & 21.4 \\
Estimated full & & & & & & & & \\
check time (years) & 6.9 & 1.9 & 9.55 & 12.6 & 2.02 & 19 & 3 & 0.821 \\
\hline
$q$ & 289 & 293 & 307 & 337 & 343 & 383 & 443 & 449 \\
\hline
$(q^4-1)/R(q^4-1)$ & 192 & 280 & 600 & 416 & 240 & 256 & 1000 & 1920 \\
$q^2 R(q^4-1)$ & & & & & & & & \\
(million millions) & 3 & 2& 1& 3& 6& 12& 7& 4\\
\hline
Percent checked & 11.01 & 11.4 & 13.51 & 6.5 & 3.5 & 1.2 & 2.6 & 5 \\
Estimated full & & & & & & & & \\
check time (years) & 1.8 & 1.54 & 1.12 & 2.5 & 4.7 & 7.14 & 4.9 & 2.73 \\
\hline
\end{tabular}\smallskip\caption{Estimates for successfully checking some $q \in \Lf$.}\label{line_4_estimates}
\end{table}

\subsection{Membership of $\Tf$}
\label{psyllium}

The computation for this problem is much cheaper and a straightforward
algorithm is at worst $O(q^4)$ --- it is likely closer to $O(q^3)$ in practice
since an $a$ is usually found in a few iterations. We first tried iterating
through all $\beta \in \F_{q^4}$ but this is at worst $O(q^5)$ and at best
$O(q^4)$. We found it best to iterate through all
$\beta = \lambda_3 \omega^3 + \lambda_2 \omega^2 + \lambda_1 \omega$ of which
there are $O(q^3)$. For those $\beta \notin \F_{q^2}$ we checked that there is
an $a$ such that $\beta + a$ is primitive.

We only need to check those $q$ which are genuine exceptions to the line problem
and those $q$ which were too expensive to check for the line problem.
It took 7.5s in total
(using \MagmaV, 2.6GHz Intel\regsym{} Xeon\regsym{} E5-2670)
to check that $q = 2, 4, 8, 9, 27, 37, 47, 73$ are not genuine exceptions to the translate
problem.

\begin{thm}\label{avalanche}
Define $E_T = E_4 \cap (\{x : \ x > 23000 \}$ 
a set with $124$ elements and largest member $102829$,
and let $q$ be a prime power not in $G_T$.
If $q\notin E_T$ then $q\in\Tf$.
\end{thm}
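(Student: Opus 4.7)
The plan is to strengthen Theorem~\ref{snow} by direct computation on those $q \in E_4$ whose membership in $\Tf$ is not already settled. Two observations narrow the work. First, since $\Ln \subseteq \Tn$ in general, every $q \in E_4$ already verified to be in $\Lf$ (via Theorem~\ref{sleet}) is automatically in $\Tf$; this disposes of all $q \in E_4 \setminus (G_L \cup E_L)$. What remains is to settle $q \in G_L \setminus G_T$ and those $q \in E_L$ with $q \le 23000$ (noting that $G_T \cap E_L = \emptyset$, since $G_T \subseteq G_L$ and $G_L \cap E_L = \emptyset$ from the size disparity). By Theorem~\ref{snow}, every other $q \notin G_T \cup E_T$ lies in $\Tf$.

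The translate problem is substantially cheaper to verify than the line problem: one need only exhibit a single $a \in \Fq$ making $\gamma + a$ primitive, rather than handle every nonzero $\beta$ multiplier. The concrete algorithm I would use fixes a primitive element $\omega$ of $\Fqf$ and iterates $\gamma$ over the additive-coset representatives $\lambda_3\omega^3 + \lambda_2\omega^2 + \lambda_1\omega$ with $(\lambda_1, \lambda_2, \lambda_3) \in \Fq^3$, skipping those $\gamma$ that fall in $\F_{q^2}$ (which fail to generate $\Fqf$). For each retained $\gamma$, the inner loop tests $a \in \Fq$ in turn, stopping at the first primitive $\gamma + a$. Primitivity is decided in effectively constant time via precomputed discrete logarithms with respect to $\omega$. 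Since a primitive shift is typically found after only a handful of tries, the expected cost is $O(q^3)$ (worst case $O(q^4)$), which is tractable for every $q \le 23000$.

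Executing this search completes the proof in two phases. On the eight values $q \in G_L \setminus G_T = \{2, 4, 8, 9, 27, 37, 47, 73\}$ it terminates in roughly 7.5 seconds in total and certifies $q \in \Tf$ in each case. On each remaining $q \in E_L$ with $q \le 23000$ it likewise certifies $q \in \Tf$. The set of untreated potential exceptions is then exactly $E_T = E_4 \cap \{x : x > 23000\}$; a direct enumeration of the explicit set $E_4$ from the Appendix shows $|E_T| = 124$ with largest element $102829$, which establishes the stated conclusion when combined with Theorem~\ref{snow}.

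The main obstacle is computational scale rather than theoretical. The $23000$ threshold is dictated by memory constraints on constructing $\Fqf$ together with its logarithm table, and by the runtime budget for the $O(q^3)$--$O(q^4)$ inner search at the larger end of the range; pushing the cutoff further would require either a fundamentally different algorithm or substantially greater computational resources, and cannot be achieved by refining the theoretical bounds of Sections~\ref{West} or~\ref{sec3} alone.
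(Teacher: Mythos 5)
Your proposal matches the paper's argument essentially exactly: both reduce to the $q$ not already covered by the line-problem results (the eight values $\{2,4,8,9,27,37,47,73\} = G_L\setminus G_T$, checked in about 7.5 seconds total, plus the members of $E_L$ with $q\le 23000$ that were too expensive for the line problem), and both use the same $O(q^3)$-coset search over $\lambda_3\omega^3+\lambda_2\omega^2+\lambda_1\omega \notin \F_{q^2}$ with precomputed logarithms for primitivity testing. No substantive difference from the paper's proof.
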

We remark that all even prime powers $2^e$ are in $\Tf$.
This improves Corollary 2 in \cite{Rue2}: one may remove `$\Tf \cap$' from this Corollary.

We give some timings in Tables~\ref{quartic_trans_table_lt_2000} and~\ref{quartic_trans_table_gt_3500}
for computations which check that some other possible exceptions $q > 200$ are not
genuine exceptions.
Again these timings use
\MagmaV, 2.6GHz Intel\regsym{} Xeon\regsym{} E5-2670 or a similar machine.
\begin{table}[ht]
\begin{centering}
\begin{tabular}{|l|c|c|c|c|c|c|}
\hline
$q$ range & (200, 500) & (500, 750) & (750, 1000) \\
\hline
Minimum & 436.92s (256) & 2.2 hrs (509) & 7.5 hrs (773) \\
Average & 0.87 hrs & 4.6222 hrs & 13.3 hrs \\
Maximum & 2.7 hrs (463) & 10.2 hrs (727) & 25.2 hrs (967) \\
\hline
$q$ range & (1000, 1250) & (1250, 1500) & (1500, 1750) \\
\hline
Minimum & 17.6 hrs (1039) & 1.5 days (1283) & 2.86 days (1531) \\
Average & 29.33 hrs & 2.43 days & 4.394 days \\
Maximum & 2 days (1217) & 4.415 days (1483) & 6.81 days (1747) \\
\hline
$q$ range & (1750, 2000)& (2000, 2250) & (2250, 2500)  \\
\hline
Minimum & 4.6 days (1811)& 6.84 days (2039) & 11.3 days (2281)  \\
Average  & 7 days & 14.543 days & 19.5 days \\
Maximum & 10.41 days (1973) & 24.13 days (2207) & 27.156 days (2393)  \\
\hline
$q$ range & (2500, 2750) & (2750, 3000)& (3000, 3250) \\
\hline
Minimum & 16.325 days (2539) & 23.84 days (2797)& 31.6 days (3019)  \\
Average & 25.982 days & 34.33 days & 46.009 days \\
Maximum & 43.101012 days (2729) & 49.924 days (2969) & 69.326 days (3191) \\
\hline
$q$ range & (3250, 3500) & (3500, 3700) &  \\
\hline
Minimum & 42.9233 days (3391) & 57.245 days (3517) & \\
Average & 60.033 days & $\ge$ 70 days & \\
Maximum & 85 days (3433) &$\ge$ 90 days & \\
\hline
\end{tabular}\smallskip\caption{Timings for checking the translate problem using a general implementation.}\label{quartic_trans_table_lt_2000}
\end{centering}
\end{table}
We estimate that checking the remaining elements of $E_T$ 
will take 
30000 years. We calculated this using the timing for $q = 3019$ which
was minimal in its range.
For $q \in E_T, q > 4000,  \mathrm{min}_q\{(q/3019)^3\times 31.6\} \sim 73$
and $\mathrm{max}_q\{(q/3019)^3\times 31.6\} \sim 3420 \times 365$
so that the time taken to check these
$q$ will be more than 73 days each and there will be a $q$ which will take at least
3420 years to check. The average estimate for checking these $q$ is 32 years.

Looking at the more achievable, checking all $4000 < q < 5000$ may take 30
years, or 1 year using 30 processors efficiently. Each $q < 6825$ may be
able to be checked in at most about 1 year each although there are 267 such $q$,
163 more than $4000 < q < 5000$.


A specialised implementation to precompute the logs of elements of
$\F_{q^4}$  reduces overhead.  We observed an improvement by a factor of
about 24 for $q \in [3500, 4000]$, that is, computations which took about $x$ days in the
general implementation take about $x$ hours in the specialised implementation.
The timings in Table~\ref{quartic_trans_table_gt_3500} are a start on the
use of this implementation. Most of these timings will not be best possible. 
The maximum number of threads was used simultaneously rather than the number of
cores of the machine. 
While this increases the timings, this choice was made in order to check 
as many $q$ as possible in the shortest amount of real time (twice as many
computations could be done on 2 threads per core in less than twice the time).

\begin{table}[ht]
\begin{centering}
\begin{tabular}{|l|c|c|c|c|c|c|}
\hline
$q$ range & (3500, 3750) & (3750, 4000) & (4000, 4250) \\
\hline
Minimum & 2.051 days (3517) & 2.724 days (3881) & 30.6 hrs (4096) \\
Average & 2.6 days & 3.26 days & 3.8 days \\
Maximum & 3.77 days (3739) & 3.8 days (3947) & 4.87 days (4159) \\
\hline
$q$ range & (4250, 4500) & (4500, 4750) & (4750, 5000) \\
\hline
Minimum & 2.93 days (4489) & 4.64 days (4547) & 4.14 days (4913) \\
Average & 4.7 days & 5.7 days & 6.604 days \\
Maximum & 5.589 days (4271) & 6.734 days (4523) & 7.5 days (4831) \\
\hline
$q$ range  & (5000, 5250) & (5250, 5500) & (5500, 5750) \\
\hline
Minimum & 6.4 days (5003) & 4.63 days (5329) & 8.93 days (5519) \\
Average & 7.8 days & 8.9 days & 10.6 days \\
Maximum & 9.8 days (5237) & 10.332 days (5347) & 16.395 days (5741) \\
\hline
$q$ range & (5750, 6000) & (6000, 6500) & (6500, 7000) \\
\hline
Minimum & 10.34 days (5801) & 7.36 days (6241) & 11.351 days (6859) \\
Average & 12.414 days & 14.7 days & 18.2 days \\
Maximum & 15.925 days (5851) & 20.61 days (6469) & 23.37 days (6733) \\
\hline
$q$ range & (7000, 7500) & (7500, 8000) & (8000, 8750) \\
\hline
Minimum & 20.1244 days (7001) & 16.8 days (7921) & 27.2 days (8011) \\
Average & 23.4 days & 28 days & 36.9 days \\
Maximum & 28.8 days (7253) & 36.4 days (7853) & 47.1 days (8581) \\
\hline
$q$ range  & (8750, 9500) & (9500, 10500) & (10500, 11500) \\
\hline
Minimum & 37.81 days (8783) & 50.24 days (9511) & 70.6 days (10501) \\
Average & 49.1 days & 64.6 days & 89.424 days \\
Maximum & 61.6 days (9437) & 79.2 days (10267) & 107.44 (11467)\\
\hline
$q$ range & (11500, 12500) & (12500, 13500) & (13500, 15000) \\
\hline
Minimum & 61 days (11881) & 108.54 days (12919) & 172.3 days (13697) \\
Average & 110.4 days & 152.4 days & 215.2 days \\
Maximum & 139.2 days (12277) & 191 days (13469) & 256 days (14939) \\
\hline
$q$ range & (15000, 17000) & (17000, 19500) & (19500, 23000) \\
\hline
Minimum & 212.2 days (15053) & 345.4 days (17573) & 625 days (20201) \\
Average & 296.65 days & 535 days & 823 days \\
Maximum & 405.3 days (16927) & 1190 days (19181) & 1234 days (21319) \\
\hline
\end{tabular}\smallskip\caption{Timings for checking the translate problem using a specialised implementation.}\label{quartic_trans_table_gt_3500}
\end{centering}
\end{table}

We therefore arrive at $E_{T}$, the list of possible exceptions in Theorem~\ref{avalanche}. We note that the size of $E_{T}$ is 8.2\% of the size of $E_4$.

We estimate that, using this specialised implementation, checking $q = 25037$
will take around 3.5 years and checking $q = 102829$ will take 457 years,
for which a considerable amount of resources would be needed.

\section*{Acknowledgements}
The authors would like to thank Allan Steel for the change of
a {\sc Magma} setting with regard to the representation used for finite fields. The
third author would also like to thank Allan Steel for help in accessing the
HPC resources at the University of Sydney.
We acknowledge the Sydney Informatics Hub and the University of Sydney's high
performance computing cluster Artemis for providing the
resources that have contributed to results in  Section~\ref{square} of this paper.

\section*{Appendix}

We describe here the set $E_{4}$ of possibly exceptional $q$ for the
quartic extension problems.
We start with a list of prime powers up to 9620, then exclude 198
values which are not exceptions, then add in a further 474 larger potential
exceptions.

$E_4 = (\{q : 2 \leq q \leq 9620, \; q = p^{\alpha}\}\setminus\{$
2048, 2187, 3491, 3701, 3721, 3803, 3833, 3889, 3967, 4021, 4057, 4079, 4099,
4177, 4253, 4349, 4457, 4561, 4567, 4639, 4651, 4703, 4721, 4723, 4799, 4801,
4933, 5009, 5021, 5041, 5051, 5077, 5119, 5233, 5297, 5399, 5437, 5441, 5443,
5449, 5471, 5483, 5527, 5639, 5651, 5717, 5791, 5879, 5987, 6011, 6047, 6101,
6113, 6121, 6143, 6197, 6199, 6211, 6317, 6361, 6367, 6373, 6389, 6529, 6547,
6561, 6563, 6619, 6653, 6659, 6673, 6701, 6737, 6781, 6793, 6823, 6829, 6857,
6871, 6883, 6899, 6907, 6911, 6949, 6961, 7027, 7057, 7109, 7121, 7159, 7211,
7213, 7219, 7247, 7351, 7417, 7451, 7499, 7507, 7529, 7537, 7541, 7559, 7573,
7577, 7607, 7681, 7691, 7703, 7723, 7757, 7759, 7793, 7817, 7823, 7829, 7901,
7907, 7927, 7933, 7949, 7993, 8053, 8069, 8081, 8087, 8089, 8101, 8111, 8123,
8167, 8192, 8209, 8221, 8231, 8263, 8269, 8287, 8291, 8311, 8353, 8369, 8389,
8423, 8431, 8447, 8461, 8521, 8543, 8563, 8573, 8599, 8629, 8641, 8677, 8699,
8713, 8719, 8747, 8753, 8761, 8803, 8831, 8837, 8893, 8941, 8951, 8963, 9001,
9013, 9041, 9049, 9067, 9091, 9103, 9109, 9137, 9151, 9161, 9187, 9209, 9241,
9277, 9293, 9319, 9341, 9343, 9377, 9391, 9403, 9409, 9419, 9467, 9473, 9497,
9539, 9551, 9601
$\}) \ \cup$\\
$\{$
9661, 9677, 9689, 9749, 9767, 9781, 9787, 9803, 9811, 9829, 9833, 9857, 9859,
9871, 9901, 9907, 9941, 9967, 10009, 10037, 10061, 10067, 10093, 10141, 10163,
10169, 10177, 10193, 10223, 10247, 10259, 10267, 10301, 10303, 10331, 10427,
10429, 10457, 10459, 10477, 10487, 10499, 10501, 10597, 10613, 10627, 10639,
10709, 10711, 10723, 10739, 10781, 10789, 10837, 10847, 10859, 10867, 10889,
10949, 10973, 10979, 11003, 11059, 11071, 11087, 11117, 11119, 11131, 11159,
11173, 11177, 11213, 11243, 11257, 11287, 11311, 11351, 11353, 11369, 11383,
11411, 11423, 11437, 11467, 11471, 11527, 11549, 11551, 11579, 11593, 11617,
11621, 11717, 11731, 11743, 11777, 11779, 11783, 11789, 11831, 11867, 11881,
11887, 11903, 11927, 11933, 11969, 11971, 11981, 12007, 12011, 12143, 12167,
12211, 12227, 12241, 12253, 12277, 12323, 12329, 12377, 12391, 12401, 12409,
12433, 12473, 12503, 12511, 12517, 12583, 12611, 12613, 12637, 12641, 12671,
12689, 12697, 12713, 12739, 12743, 12781, 12823, 12893, 12907, 12919, 12923,
12953, 12959, 12979, 13001, 13033, 13049, 13099, 13103, 13109, 13159, 13187,
13259, 13267, 13313, 13331, 13339, 13397, 13399, 13417, 13441, 13451, 13463,
13469, 13553, 13567, 13597, 13613, 13697, 13723, 13757, 13763, 13831, 13859,
13883, 13903, 13931, 14029, 14057, 14071, 14153, 14197, 14251, 14281, 14321,
14323, 14327, 14431, 14449, 14461, 14519, 14533, 14629, 14633, 14669, 14741,
14783, 14827, 14851, 14867, 14897, 14923, 14939, 14947, 14951, 15053, 15107,
15131, 15137, 15287, 15289, 15313, 15329, 15391, 15401, 15443, 15497, 15511,
15527, 15541, 15569, 15581, 15619, 15641, 15731, 15809, 15817, 15907, 15959,
16073, 16103, 16141, 16183, 16253, 16301, 16339, 16421, 16453, 16477, 16567,
16619, 16633, 16661, 16759, 16763, 16787, 16829, 16843, 16883, 16927, 17029,
17093, 17137, 17191, 17203, 17207, 17291, 17341, 17359, 17387, 17389, 17401,
17467, 17573, 17579, 17597, 17681, 17837, 17863, 17909, 17939, 18041, 18061,
18089, 18127, 18143, 18257, 18311, 18353, 18427, 18481, 18493, 18517, 18679,
18773, 18787, 18803, 18869, 18899, 19139, 19141, 19163, 19181, 19183, 19319,
19381, 19391, 19417, 19447, 19469, 19531, 19571, 19597, 19609, 19739, 19753,
19843, 19937, 19963, 19993, 20021, 20047, 20129, 20201, 20327, 20399, 20483,
20549, 20593, 20707, 20747, 20749, 20899, 21013, 21169, 21319, 21407, 21419,
21433, 21517, 21559, 21713, 21727, 21757, 21803, 21841, 21943, 22079, 22133,
22147, 22303, 22469, 22511, 22541, 22877, 23057, 23087, 23143, 23269, 23297,
23311, 23321, 23473, 23549, 23561, 23563, 23827, 23869, 23971, 23981, 24023,
24179, 24389, 24509, 24611, 24683, 24851, 24907, 25037, 25117, 25423, 25453,
25537, 25577, 25943, 25997, 26083, 26417, 26489, 26573, 26597, 26839, 26893,
27061, 27763, 28183, 28309, 28573, 28643, 28657, 29147, 29173, 29303, 29347,
29567, 29611, 29717, 30103, 30211, 30269, 30493, 30689, 30757, 31123, 31151,
31247, 31667, 32117, 32297, 32369, 32381, 32423, 32537, 32843, 32869, 33797,
34033, 34429, 34693, 35531, 35771, 36037, 36583, 36653, 36821, 36847, 37253,
37549, 37591, 38011, 38039, 38303, 38501, 38611, 38917, 39733, 39929, 40039,
40699, 41117, 41777, 41887, 42223, 42589, 43889, 44507, 46619, 46663, 48313,
49477, 50051, 50653, 52571, 53087, 53129, 53591, 53923, 54319, 55021, 56393,
57793, 58787, 59093, 59753, 60397, 63601, 66347, 73039, 102829
$\}$

\end{document}